\documentclass[12pt]{amsart}

\usepackage{amsmath,amssymb,mathrsfs}

\usepackage{amsthm} 

%
\usepackage{mathrsfs,amsopn}
\usepackage[margin=3.1cm]{geometry}

 \usepackage[dvips]{graphicx,color}

\usepackage{amsfonts}
\usepackage{eucal}
%
 %


\usepackage{times}

\numberwithin{equation}{section}

\usepackage[latin1]{inputenc}
\usepackage[T1]{fontenc}
\usepackage{array}
\usepackage[francais]{babel}
\usepackage{fontenc}
\usepackage{amsfonts,amsmath,amssymb}
\newcommand{\C}{\mathbb{C}}

\usepackage{color}

\newcommand{\D}{\textbf{D}}
\newcommand{\T}{\textbf{T}}

\newtheorem{theoreme}{Théorème}[section]
\newtheorem{proposition}[theoreme]{Proposition}
\newtheorem{remarque}[theoreme]{Remarque}

\newtheorem{lemme}[theoreme]{Lemme}

\newtheorem{definition}{Définition}
\newcommand{\N}{\mathbb{N}}
\usepackage[francais]{babel}
\begin{document}
\title{Espace de Dixmier des opérateurs de Hankel sur les espaces de Bergman à poids }
\author{Romaric Tytgat}

\address{Tytgat: LATP, U.M.R. C.N.R.S. 6632, CMI\\
         Universit\'e de Provence\\
         39 Rue F-Joliot-Curie\\
         13453 Marseille Cedex 13, France}
\email{tytgat@cmi.univ-mrs.fr}

\subjclass[2000]{Primary  47B35, 32A36, 32A37}

\keywords{Hankel operator,  Dixmier trace, Bergman space}

\begin{abstract}
Nous donnons des résultats théoriques sur l'idéal de Macaev et la trace de Dixmier. Ensuite, nous caractérisons les symboles antiholomorphes $\bar{f}$ tels que l'opérateur de Hankel $H_{\bar{f}}$ sur l'espace de Bergman à poids soit dans l'idéal de Macaev et nous donnons  la trace de Dixmier. Pour cela, nous regardons le comportement des normes de Schatten $\mathcal{S}^{p}$ quand $p$ tend vers $1$ et nous nous appuyons sur le résultat de Englis et Rochberg sur l'espace de Bergman. Nous parlons aussi des puissances de tels opérateurs.
\end{abstract}

\maketitle

\section { Introduction}
Les propriétés spectrales des opérateurs de Hankel sont traités dans \cite{AFP} et \cite{A} pour les espaces de Bergman et dans l'ouvrage de \cite{Pe} pour l'espace de Hardy. On pourra aussi consulter le livre de Zhu \cite{Z} et les références qu'il contient. Les auteurs de \cite{SY} s'intéressent eux au cas de certains espaces de Fock. L'étude de leur trace de Dixmier est plus récente, même si le premier résultat dans ce sens se trouve dans \cite{AFP}, où il est montré que l'espace de Besov $\mathcal{B}^{1}$ est inclus dans l'espace de Dixmier $\mathcal{D}^{1}$. C'est une dizaine d'années plus tard que l'espace de Dixmier a été caractérisé par Li et Russo \cite{LR} dans le cas du petit opérateur de Hankel. Il a encore fallu attentre plus de dix ans pour connaître l'expression de la trace de Dixmier du grand opérateur de Hankel sur l'espace de Bergman de la boule unité de $\C^{d}, d>1$. Ce dernier résultat a été obtenu par Englis, Guo et Zhang \cite{EGZ}. Le cas $d=1$ a dû être traité de façon différente, c'est dans les travaux de Englis et Rochberg \cite{ER} que l'espace et la trace de Dixmier ont été exhibés. Les auteurs arrivent à se ramener, par une transformation unitaire, à un opérateur pseudo-différentiel, et utilisent le résultat de Wodzicki. Notre approche est  celle de \cite{LR} mais nous utilisons les résultats de \cite{ER}.\\

On note $\D$ le disque unité de $\C$ et $\mathcal{H}ol(\D)$ l'ensemble des fonctions holomorphes du disque. Pour $\alpha > -1$ un réel, on pose
\[ L^{2,\alpha}:=L^{2}((\alpha+1)(1-|z|^{2})^{\alpha}dA(z),\D)\] où $dA(z)$ est la mesure d'aire normalisée, et
\[\mathcal{A}^{2,\alpha}:=L^{2,\alpha} \cap \mathcal{H}ol(\D). \]
Soit alors $P_{\alpha}$ la projection (orthogonale) de Bergman de $L^{2,\alpha}$ sur $\mathcal{A}^{2,\alpha}$:
\[ P_{\alpha}(f)(z) = \int_{\D} f(w) K_{\alpha}(z,w) dA_{\alpha}(z) \]
où $dA_{\alpha}(z)=(\alpha+1)(1-|z|^{2})^{\alpha}dA(z)$ et $K_{\alpha}(z,w) = (1-z\bar{w})^{-(2+\alpha)}$ pour $z,w  \in \D$. \\
Pour $f \in L^{2,\alpha}$, on définit l'opérateur  $H^{\alpha}_{f}$ de $L^{2,\alpha}$ dans lui même par:
\[ H^{\alpha}_{f}(g)(z)= (I-P_{\alpha})(fP_{\alpha}g)(z). \]
Cet opérateur est densément défini et s'appelle l'opérateur de Hankel (cf \cite{AFP} et \cite{Z}). \\

Avant de continuer, faisons quelques rappels sur la métrique de Bergman, c'est l'objet du paragraphe 4.3 p 58 de \cite{Z}. On définit la distance $\beta$ sur $\D$ par:
\[ \beta(z,w) := \frac{1}{2} \log\left( \frac{1 + \rho(z,w)}{1-\rho(z,w)}\right) \quad z,w \in \D \]
où
\[\rho(z,w)=|\varphi_{z}(w)|=\left|\frac{z-w}{1-\bar{w}z}\right| \quad z,w \in \D.\]
Le disque hyperbolique de centre $z \in \D$ et de rayon $r>0$ est alors:
\[D(z,r)=\{ w \in \D ; \beta(z,w)<r \}.\]
 On a alors le lemme 4.3.6 p 62 de \cite{Z}:
\begin{lemme} \label{propriétésatomiques}
Il existe un entier $N$ tel que pour tout réel positif $r\leq1$, il existe une suite $(\lambda_{n})$ du disque vérifiant
\begin{enumerate}
	\item $\D= \cup_{n=1}^{\infty} D(\lambda_{n},r)$;
	\item $D(\lambda_{n},r) \cap D(\lambda_{m},r) = \varnothing$ si  $n \neq m$;
	\item tout point de $\D$ appartient au plus à $N$ disques $D(\lambda_{n}, 2r)$.
\end{enumerate}
\end{lemme}
Toute suite du disque vérifiant ce lemme sera appelée suite atomique.\\

Rappelons  que la p-ième classe de Schatten $\mathcal{S}^{p}$ pour $0<p\leq \infty$ est le sous espace des opérateurs compacts $T$ vérifiant $(s_{n}(T))_{n \in \N} \in l^{p}(\N)$ où $(s_{n}(T))_{n}$ est la suite des valeurs singulières rangée par ordre décroissant (cf \cite{MV} ).\\

Les auteurs de \cite{AFP} ont montré que pour $f \in \mathcal{H}ol(\D)$ et $p>1$,  $H_{\bar{f}}^{\alpha} \in \mathcal{S}^{p}$ si et seulement si $ f \in \mathcal{B}^{p}$, où
 $\mathcal{B}^{p}$ est l'espace des fonctions holomorphes du disque vérifiant:
\begin{eqnarray}\label{besovclassique}
 ||f||_{\mathcal{B}^{p}}^{p} :=\int_{\D} ((1-|z|^{2})|f'(z)|)^{p} d\lambda(z) < \infty 
\end{eqnarray}
avec $d\lambda(z)$ est la mesure Möbius invariante:
\[d\lambda(z) := \frac{1}{(1-|z|^{2})^{2}} dA(z) \]

Notons $\mathcal{S}_{1}^{+}$ l'idéal (de Macaev) des opérateurs compacts $T$ vérifiant:
\begin{displaymath}
\sup_{N \geq 2} \left \{\frac{1}{\ln{N}}\sigma_{N}(T) \right\} < \infty
\end{displaymath}
où
\begin{displaymath}
\sigma_{N}(T)=\sum_{k=0}^{N-1} s_{k}(T)
\end{displaymath}
et pour $T \in \mathcal{S}^{1}_{+}$, on note $Tr_{\omega}(T)$ sa trace de Dixmier.\\
 Moralement $Tr_{\omega}(T)= \lim_{N \rightarrow +\infty} \frac{1}{\ln{N}}\sigma_{N}(T)$. Choisissons un forme linéaire positive $\omega$ sur $l^{\infty}(\N)$ et notons sa valeur en $a=(a_{0},a_{1},...)$ par $Lim_{\omega}(a_{n})$. On demande, lorsqu'elle  existe, que  $Lim_{\omega}(a_{n})=\lim  a_{n}$ . On demande aussi que $\omega$  soit invariante par 2-dilatation, propriété technique fondamentale pour la théorie, mais qui ne nous concerne pas ici. Pour un opérateur positif $T$ dans $\mathcal{S}^{1}_{+}$ on définit sa trace de Dixmier  $Tr_{\omega}(T)$ par \[Lim_{\omega} \left(\frac{\sigma_{N}(T)}{\ln(N+1)} \right).\] $Tr_{\omega}(.)$ s'étend à $\mathcal{S}^{1}_{+}$ par linéarité. La définition de la trace de Dixmier dépend de $\omega$ et nous dirons que $T$ est mesurable si $Tr_{\omega}(T)$ est indépendant de $\omega$. Nous en verrons des exemples. On renvoie à  \cite{C} et \cite{CM}  pour plus de détails. La trace de Dixmier est une forme linéaire continue si nous équipons $\mathcal{S}^{1}_{+}$ de la norme complète:
\begin{displaymath}
||T||_{\mathcal{S}_{1}^{+}} = \sup_{N \geq 2} {\frac{1}{\ln{N}}\sum_{k=0}^{N-1} s_{k}(T) }.
\end{displaymath}

On définit alors les espaces de Dixmier $\mathcal{D}^{p}_{\alpha}$ comme ceci:

\begin{displaymath}
\mathcal{D}^{p}_{\alpha}=\{ f \in \mathcal{A}^{2,\alpha} \quad | \quad |H_{\bar{f}}^{\alpha}|^{p} \in \mathcal{S}_{1}^{+} \}
\end{displaymath}
avec
\begin{displaymath}
||f||_{\mathcal{D}^{p}_{\alpha}}=|||H^{\alpha}_{\bar{f}}|^{p}||_{\mathcal{S}_{1}^{+}}^{1/p}.
\end{displaymath}
On note simplement $\mathcal{D}^{p}$ quand $\alpha=0$.\\
De facon équivalente, on a:
\[ f \in \mathcal{D}^{p}_{\alpha} \Leftrightarrow \sum_{n=0}^{N} s_{n}(H^{\alpha}_{\bar{f}})^{p} = O(\ln(N)). \]

Si $p \geq 1$, $\mathcal{D}^{p}_{\alpha} $ est un espace vectoriel  complet pour la semie norme $||.||_{\mathcal{D}^{p}_{\alpha} }$, tandis que 
 si $0<p<1$,  $\mathcal{D}^{p}_{\alpha} $ est un espace vectoriel  complet pour la quasi-semie norme $||.||_{\mathcal{D}^{p}_{\alpha} }$ (voir \cite{T}).\\
 
 Par exemple, pour tout $k\geq0$, $H_{\bar{z}^{k}}^{\alpha} \in \mathcal{S}^{1}_{+}$ et 
\begin{eqnarray}\label{casmonome}
Tr_{\omega}(|H_{\bar{z}^{k}}^{\alpha}|)= k\sqrt{\alpha+1}.
\end{eqnarray}
Cela se calcule sans peine, puisque l'on connait les valeurs propres des opérateurs de Hankel à symbole monômial (cf \cite{AFP}).

Pour $p>0$, notons l'espace de Hardy $H^{p}$, l'ensemble des fonctions $f \in \mathcal{H}ol(\D)$ holomorphes du disque vérifiant:
\[ ||f||_{H^{p}}^{p} := \sup_{r \in [0;1[} \int_{\T} |f(re^{i\theta})| \frac{d\theta}{2\pi} < \infty. \]
 
Pour le moment, on a les résultats suivants:

\textbf{Théorème}(M. Englis et R. Rochberg \cite{ER})
Soit $f$ une fonction holomorphe sur $\D$:
\[ H_{\bar{f}} \in \mathcal{S}^{1}_{+} \Leftrightarrow  f \in H' \] 
où $H'$ est l'espace des fonctions à dérivée dans $H^{1}$.
De plus, pour $f' \in H^{1}$, on a
\[Tr_{\omega}(|H_{\bar{f}}|)=\int_{\T} |f'| d\theta\]
où $d\theta$ est la mesure de Lebesgue normalisée sur le cercle unité.\\

\textbf{Théorème}(M. Englis et R. Rochberg \cite{ER})
Soit $f \in C^{\infty}(\bar{\D})$  alors $ H_{\bar{f}} \in \mathcal{S}^{1}_{+}$ et 
\[Tr_{\omega}(|H_{f}|)=\int_{\T} |\bar{\partial}f| d\theta\]
où $d\theta$ est la mesure de Lebesgue normalisée sur le cercle unité.

Nous allons étendre ces résultat pour tout $\alpha>-1$:

\begin{theoreme}
\label{théorème1}
Soit $f$ une fonction holomorphe sur $\D$ et $\alpha>-1$:
\[ H_{\bar{f}}^{\alpha} \in \mathcal{S}^{1}_{+} \Leftrightarrow  f' \in H^{1}. \] 
De plus, pour $f' \in H^{1}$, on a
\[Tr_{\omega}(|H_{\bar{f}}^{\alpha}|)=\sqrt{\alpha+1}\int_{\T} |f'| d\theta\]
où $d\theta$ est la mesure de Lebesgue normalisée sur le cercle unité.
	\end{theoreme}
	
	\begin{theoreme}
\label{théorème2}
Soit $f \in C^{\infty}(\bar{\D})$  alors $ H^{\alpha}_{\bar{f}} \in \mathcal{S}^{1}_{+}$ et 
\[Tr_{\omega}(|H_{f}^{\alpha}|)=\sqrt{\alpha+1}\int_{\T} |\bar{\partial}f| d\theta\]
où $d\theta$ est la mesure de Lebesgue normalisée sur le cercle unité.
	\end{theoreme}
	
Un mot sur les notations, $f(z) \lesssim g(z)$ siginifie qu'il existe une constante $C>0$, pour tout  $z$ dans l'ensemble considéré, vérifiant $ f(z) \leq C g(z).$
De même pour $f(z) \gtrsim g(z)$, et on note $f(z) \simeq g(z)$  si $f(z) \gtrsim g(z)$ et $f(z) \lesssim g(z)$.
\section{Résultats généraux}

%

Pour un opérateur compact $T$, on définit , $\zeta_{T}(s):= \sum_{n=0}^{\infty} s_{n}(T)^{s}$, pour $s>1$. Si $T$ est de plus positif, alors  $\zeta_{T}(s)=Tr(T^{s})$. Notre approche est clairement motivée par le résultat de S.Y. Li et  B. Russo  \cite{LR}:	
\begin{lemme}
\label{lirusso}
Soit $T$ un opérateur compact, on a alors:
\begin{displaymath}
T \in \mathcal{S}_{1}^{+} \Leftrightarrow \sup_{s \in]1;2[}\{(s-1)\zeta_{T}(s)\} < \infty.
\end{displaymath}
\end{lemme}

On peut dire un peu plus et montrer que ces deux quantités sont équivalentes. Si $(a_{n})_{n \in \N}$ est une suite de nombres qui converge vers $0$, on note $(a^{*}_{n})_{n \in \N}$ la suite $(|a_{n}|)_{n \in \N}$ rangée par ordre décroissant. \\
Soit  $l^{1,+}$ l'ensemble des suites vérifiant $\sum_{k=0}^{n}a_{k}^{*}=O(\ln(n+1))$ muni de la norme:

\begin{displaymath}
||a||_{1,+}=\sup_{N\in\N^{*}} \left\{ \frac{1}{\ln(N+1)} \sum_{k=0}^{N}a_{k}^{*} \right\}.
\end{displaymath}

\begin{lemme}
$l^{1,+}$ est complet.
\end{lemme}
\begin{proof}
Pour tout entier $N>0$, on définit la famille de normes
\begin{displaymath}
||a||_{N}= \frac{1}{\ln(N+1)} \sum_{k=0}^{N}a_{k}^{*}. 
\end{displaymath}
En fait, voir \cite{S} p 4, on peut définir les $a_{k}^{*}$ de la façon suivante. On pose \[ a_{0}^{*}= \max_{k \in \N} |a_{k}|, a_{0}^{*} + a_{1}^{*} =\max_{k \neq j}(|a_{k}|+|a_{j}|)\] et ainsi de suite. Il est alors clair que les $||.||_{N}$ sont des normes.\\
On a:
\begin{displaymath}
\frac{1}{\ln(N+1)}||a||_{\textrm{sup}}\leq||a||_{N} \leq N||a||_{\textrm{sup}}.
\end{displaymath}
On note ici $||.||_{\sup}$ la norme infinie plutôt que $||.||_{\infty}$ afin d'éviter toute confusion avec le cas $N=\infty$.\\
Si $(a^{p})_{p \in \N}$ est une suite de Cauchy de $l^{1,+}$, elle l'est pour chaque $||.||_{N}$ et donc pour $||.||_{\sup}$. Ainsi, il existe $a\in l^{\infty}$ tel que 
\begin{displaymath}
||a^{p}-a||_{N} \rightarrow 0
\end{displaymath}
quand $p$ tend vers l'infini et pour tout $N>0$. Soit $\epsilon >0$, pour $r$ et $q$ assez grands, on a:
\begin{displaymath}
\frac{1}{\ln(N+1)}\sum_{k=0}^{N} (a^{q}_{k}-a^{r}_{k})^{*} \leq \epsilon \quad \forall N.
\end{displaymath}
A la limite sur $r$, on a:
\begin{displaymath}
\frac{1}{\ln(N+1)}\sum_{k=0}^{N} (a^{q}_{k}-a_{k})^{*} \leq \epsilon \quad \forall N
\end{displaymath}
et donc $(a^{p})$ converge vers $a$ dans $l^{1,+}$.
\end{proof}
On peut définir une autre norme sur $l^{1,+}$:
\begin{displaymath}
||a||_{\zeta}= \sup_{s\in ]1;2]} \left\{ \left((s-1)\sum |a_{k}|^{s}\right)^{1/s} \right\}.
\end{displaymath}
En effet:
\begin{lemme}
Soit $a \in l^{1,+}$. Alors
\begin{displaymath}
||a||_{\zeta} \lesssim ||a||_{1,+}.
\end{displaymath}
\end{lemme}
\begin{proof}
De l'équivalent $\ln(n)\sim \sum_{k=1}^{n} \frac{1}{k}$ en l'infini, on déduit que pour tout entier naturel $n$, on a:
\begin{displaymath}
\sum_{k=1}^{n} |a_{k}| \lesssim \sum_{k=1}^{n} \frac{1}{k} ||a||_{1,+}.
\end{displaymath}
Par le lemme 16.30 p 165 de \cite{MV}, on a pour $1<s\leq2$:
\begin{eqnarray*}
\sum_{k=1}^{n} |a_{k}|^{s} &\lesssim& \sum_{k=1}^{n} \frac{1}{k^{s}} ||a||_{1,+}^{s}\\
&\lesssim& \frac{s}{s-1} ||a||_{1,+}^{s}.\\
\end{eqnarray*}
Ainsi
\begin{displaymath}
\left((s-1) \sum_{k \in \N} |a_{k}|^{s}\right)^{1/s}  \lesssim  ||a||_{1,+}.
\end{displaymath}
Passant au sup sur les $s \in ]1;2]$, on obtient le résultat.
\end{proof}

\begin{lemme}
$l^{1,+}$ muni de $||.||_{\zeta}$ est complet.
\end{lemme}
\begin{proof}
Si $(a^{p})_{p \in \N}$ est une suite de Cauchy pour $||.||_{\zeta}$ elle l'est pour chaque $l^{s}$ et donc converge vers $a_{s} \in l^{s}$. Or si $1\leq p_{1}\leq p_{2}$, on a $||.||_{p_{1}} \geq ||.||_{p_{2}}$, ce qui montre que $a:=a_{s}$ est indépendant de $s$. Soit $\epsilon>0$, $r$ et $q$ assez grands:
\begin{eqnarray*}
||a-a^{r}||_{\zeta} &= & \sup_{s\in ]1;2]} \left\{ \left((s-1)\right)^{1/s} \limsup_{q \rightarrow + \infty} ||a^{q}-a^{r}||_{l^{s}}  \right\}\\
& \leq & \limsup_{q  \rightarrow + \infty} \sup_{s\in ]1;2]} \left\{ \left((s-1)\right)^{1/s} ||a^{q}-a^{r}||_{l^{s}}  \right\}\\
& \leq & \epsilon
\end{eqnarray*}
et donc $(a^{p})$ converge vers $a$ pour $||.||_{\zeta}$.
\end{proof}

On obtient alors 
\begin{proposition}
\label{lirusso1}
Les normes $||.||_{1,+}$ et $||.||_{\zeta}$ sont équivalentes.
\end{proposition}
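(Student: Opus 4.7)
Il reste � �tablir l'in�galit� r�ciproque $||a||_{1,+} \lesssim ||a||_{\zeta}$, puisque le lemme pr�c�dent fournit d�j� l'in�galit� $||a||_{\zeta} \lesssim ||a||_{1,+}$.

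Mon approche est directe, via l'in�galit� de H\"older suivie d'un choix optimis� du param�tre $s$. Pour $a$ avec $||a||_{\zeta}<\infty$, pour tout $N \geq 2$ et tout $s \in ]1,2]$, l'in�galit� de H\"older appliqu�e � $\sum_{k=0}^{N-1} a_{k}^{*} \cdot 1$ donne
\[
\sum_{k=0}^{N-1} a_{k}^{*} \leq N^{1-1/s} \left( \sum_{k=0}^{N-1} (a_{k}^{*})^{s} \right)^{1/s} \leq \frac{N^{1-1/s}}{(s-1)^{1/s}} ||a||_{\zeta}.
\]
Je prendrais alors $s = 1+1/\ln N$ (ce qui est bien dans $]1,2]$ d�s que $\ln N \geq 1$). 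On v�rifie ais�ment que $N^{1-1/s} = N^{(s-1)/s} \leq N^{s-1} = e$ et que $(s-1)^{-1/s} = (\ln N)^{1/s} \leq \ln N$, d'o� $\sum_{k=0}^{N-1} a_{k}^{*} \leq e \ln N \cdot ||a||_{\zeta}$. Les petites valeurs de $N$ se traitent � part en observant que $a_{0}^{*} \leq ||a||_{\zeta}$ (prendre $s=2$). En divisant par $\ln(N+1)$ et en passant au supremum, on obtient l'in�galit� voulue.

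L'unique point d�licat est le choix de l'�chelle $s-1 \sim 1/\ln N$, qui �quilibre les deux facteurs $N^{1-1/s}$ et $(s-1)^{-1/s}$; le reste se r�duit � de simples estim�es. Une voie alternative, plus abstraite, consisterait � invoquer le th\'eor\`eme d'isomorphisme de Banach: les deux lemmes de compl�tude qui pr�c�dent assurent que l'identit\'e est une bijection continue entre deux espaces de Banach, et sa r\'eciproque est alors continue par le th�or�me de l'application ouverte. Cette seconde voie est plus courte mais ne fournit pas de constante explicite; c'est pour cela que je privil�gierais la preuve directe par H\"older.
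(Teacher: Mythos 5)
Votre preuve est correcte, mais elle emprunte une route r\'eellement diff\'erente de celle du texte. Le texte se contente d'invoquer le th\'eor\`eme d'isomorphisme de Banach : les deux lemmes de compl\'etude qui pr\'ec\`edent, joints \`a l'in\'egalit\'e $||a||_{\zeta} \lesssim ||a||_{1,+}$, montrent que l'identit\'e est une bijection continue entre deux espaces de Banach, donc un isomorphisme --- c'est pr\'ecis\'ement la \og voie alternative \fg{} que vous \'evoquez \`a la fin. Votre argument direct est valable : H\"older donne bien $\sum_{k=0}^{N-1} a_{k}^{*} \leq N^{1-1/s}\bigl(\sum_{k}(a_{k}^{*})^{s}\bigr)^{1/s} \leq N^{1-1/s}(s-1)^{-1/s}||a||_{\zeta}$, et le choix $s=1+1/\ln N$ donne $N^{(s-1)/s} \leq N^{s-1}=e$ et $(s-1)^{-1/s}=(\ln N)^{1/s} \leq \ln N$ d\`es que $\ln N \geq 1$, les petits $N$ se r\'eglant par $a_{0}^{*} \leq ||a||_{\zeta}$ (cas $s=2$). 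Cette preuve directe a deux avantages : elle fournit une constante explicite (essentiellement $e$, coh\'erente avec le facteur $e$ des deux propositions suivantes du texte, qui utilisent d'ailleurs le m\^eme \'equilibrage $s-1 \sim 1/\ln N$), et elle rend superflu, pour cette proposition, le lemme de compl\'etude de $l^{1,+}$ muni de $||.||_{\zeta}$. L'approche du texte est plus courte une fois les lemmes acquis, mais purement qualitative.
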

\begin{proof}
Résulte des trois lemmes précédents et du théorème de Banach.
\end{proof}

Pour la trace de Dixmier, nous avons la proposition 4 p 306 de A. Connes \cite{C}:

\begin{lemme}
\label{connes}
Soit $T$ un opérateur compact positif dans $\mathcal{S}_{1}^{+}$. On a équivalence entre
\begin{displaymath}
(s-1)\zeta_{T}(s) \rightarrow L \quad \textrm{quand} \quad s \rightarrow 1^{+} 
\end{displaymath}
et 
\begin{displaymath}
(\ln{N})^{-1} \sum_{n=0}^{N-1} s_{n}(T) \rightarrow L \quad \textrm{quand} \quad N \rightarrow +\infty
\end{displaymath}
Sous ces conditions, $Tr_{\omega}(T)=L$. 
\end{lemme}

Lorsque nous n'avons pas convergence, on a le résultat classique:

\begin{lemme}  \label{encadrementtrace1}
Soit $T \in \mathcal{S}^{1}_{+}$ un opérateur positif, alors:
\begin{displaymath}
\liminf_{n  \rightarrow + \infty} \frac{1}{\ln(n)} \sigma_{n}(T) \leq Tr_{\omega}(T) \leq \limsup_{n  \rightarrow + \infty } \frac{1}{\ln(n)} \sigma_{n}(T).
\end{displaymath}
\end{lemme}

Mais on peut montrer que:
\begin{proposition}
Soit $(s_{k})$ une suite décroissante de limite nulle. On a:
\[\limsup_{s \rightarrow 1^{+}} (s-1)\sum_{k=1}^{\infty} s_{k}^{s}\leq \limsup_{N \rightarrow \infty} \frac{1}{\ln(N)}\sum_{k=1}^{N} s_{k}  \leq e \limsup_{s \rightarrow 1^{+}} (s-1)\sum_{k=1}^{\infty} s_{k}^{s} \]
\end{proposition}
\begin{proof}
On commence par montrer que
\[\limsup_{s \rightarrow 1^{+}} (s-1)\sum_{k=1}^{\infty} s_{k}^{s}\leq \limsup_{N \rightarrow \infty} \frac{1}{\ln(N)}\sum_{k=1}^{N} s_{k}. \]
Soit $\epsilon>0$, il existe $N_{0} \in \N$ vérifiant pour tout $N \geq N_{0}$
\[ \frac{1}{\ln(N)}\sum_{k=1}^{N} s_{k}  \leq \limsup_{N \rightarrow \infty} \left( \frac{1}{\ln(N)}\sum_{k=1}^{N} s_{k} \right)+ \epsilon. \]
Notons
\[ c = \limsup_{N \rightarrow \infty} \left( \frac{1}{\ln(N)}\sum_{k=1}^{N} s_{k} \right)+ \epsilon. \]

On a pour tout entier $N \geq N_{0}$
\[\frac{1}{\ln(N)}\sum_{k=N_{0}}^{N} s_{k} \leq \frac{1}{\ln(N)}\sum_{k=1}^{N} s_{k}\leq c \]
et donc
\[\frac{1}{\ln(N)}\sum_{k=1}^{N-N_{0}+1} s_{k+N_{0}-1} \leq c \]
ainsi pour tout $N-N_{0}+1 \geq 1$
\begin{eqnarray*}
\sum_{k=1}^{N-N_{0}+1} s_{k+N_{0}-1} &\leq& c\ln(N)  \\
&=& c   \ln(N)  \left(\sum_{k=1}^{N-N_{0}+1} \frac{1}{k} \right) ^{-1} \left( \sum_{k=1}^{N-N_{0}+1} \frac{1}{k} \right).\\
\end{eqnarray*}
On applique le lemme 16.30 p 165 de \cite{MV} et on obtient:
\[ \sum_{k=1}^{N-N_{0}+1} s_{n+N_{0}} ^{s} \leq \left(c   \ln(N)  \left(\sum_{k=1}^{N-N_{0}+1} \frac{1}{k} \right) ^{-1} \right)^{s} \sum_{k=1}^{N-N_{0}+1} \left(\frac{1}{k}\right)^{s}\]
et donc à la limite sur $N$
\[ \sum_{k=1}^{\infty} s_{n+N_{0}} ^{s} \leq c^{s} \sum_{k=1}^{\infty} \left(\frac{1}{k}\right)^{s}.\]
Par une comparaison série/intégrale on voit que:
\[ \sum_{k=1}^{\infty} \frac{1}{k^{s}} \leq \frac{s}{s-1} \]
ainsi
\[ (s-1)\sum_{k=1}^{\infty} s_{n+N_{0}} ^{s} \leq sc^{s}. \]
A la limite sur $s$:
\[ \limsup_{s \rightarrow 1^{+}} (s-1)\sum_{k=1}^{\infty} s_{n+N_{0}} ^{s} \leq c  \]
et comme 
\[ \limsup_{s \rightarrow 1^{+}} (s-1)\sum_{k=1}^{\infty} s_{n+N_{0}} ^{s} = \limsup_{s \rightarrow 1^{+}} (s-1)\sum_{k=1}^{\infty} s_{k} ^{s} \]
on a
\[ \limsup_{s \rightarrow 1^{+}} (s-1)\sum_{k=1}^{\infty} s_{k} ^{s} \leq c =\limsup_{N \rightarrow \infty} \left( \frac{1}{\ln(N)}\sum_{k=1}^{N} s_{k} \right)+ \epsilon . \]

Passons à présent à:
\[ \limsup_{N \rightarrow \infty} \frac{1}{\ln(N)}\sum_{k=1}^{N} s_{k}  \leq e \limsup_{s \rightarrow 1^{+}} (s-1)\sum_{k=1}^{\infty} s_{k}^{s}. \]
Soit $N$ un entier positif, notons 
\[ G_{N}=\left(\sum_{k=1}^{N} \frac{1}{k}\right)^{-1} \]
par Jensen, pour $s>1$, on a:
\[ \left(\sum_{k=1}^{N} \frac{s_{k}}{k} G_{N} \right)^{s} \leq  \sum_{k=1}^{N} \frac{1}{k} G_{N} s_{k}^{s} \]
et en remplacant $s_{k}$ par $ks_{k}$ on obtient
\[ \left(\sum_{k=1}^{N} s_{k}  G_{N} \right)^{s} \leq  \sum_{k=1}^{N} k^{s-1} G_{N} s_{k}^{s} \leq N^{s-1}G_{N}\sum_{k=1}^{N} s_{k}^{s}\]
d'ou

\[\left( \frac{1}{\ln(N)}\sum_{k=1}^{N} s_{k}  \right)^{s} \leq \frac{N^{s-1}G_{N}^{1-s}}{\ln(N)^{s}(s-1)} (s-1)\sum_{k=1}^{N} s_{k}^{s}. \]
Posons\[ s= 1+\frac{1}{\ln(N)}\] on a
\[ \frac{N^{s-1}G_{N}^{1-s}}{\ln(N)^{s}(s-1)} = \frac{G_{N}^{-s}}{\ln(N)^{s}} \frac{N^{s-1}G_{N}}{(s-1)}. \]
Mais
\[\frac{G_{N}^{-s}}{\ln(N)^{s}}=\left(\frac{\ln(N)+\gamma+o(1)}{\ln(N)}\right)^{1+\frac{1}{\ln(N)}} \rightarrow 1 \]
et
\[ \frac{N^{s-1}G_{N}}{(s-1)} =\frac{e^{(s-1)\ln(N)}}{(s-1)(\ln(N)+\gamma+o(1))} \rightarrow e. \]
Ainsi pour $s= 1+\frac{1}{\ln(N)}$
\begin{eqnarray*}
\limsup_{N \rightarrow \infty } \frac{1}{\ln(N)}\sum_{k=1}^{N} s_{k} &\leq& e \limsup_{N \rightarrow \infty} (s-1)\sum_{k=1}^{N} s_{k}^{s} \\
&\leq& e \limsup_{t \rightarrow 1^{+}} (t-1)\sum_{k=1}^{N} s_{k}^{t} .
\end{eqnarray*}
\end{proof}
De même:
\begin{proposition}
Soit $(s_{k})$ une suite décroissante de limite nulle. On a:
\[\liminf_{s \rightarrow 1^{+}} (s-1)\sum_{k=1}^{\infty} s_{k}^{s}\geq \liminf_{N \rightarrow \infty} \frac{1}{\ln(N)}\sum_{k=1}^{N} s_{k}  \geq e^{-1} \liminf_{s \rightarrow 1^{+}} (s-1)\sum_{k=1}^{\infty} s_{k}^{s}. \]
\end{proposition}
\begin{proof}
De la même façon on montre que
\[\liminf_{s \rightarrow 1^{+}} (s-1)\sum_{k=1}^{\infty} s_{k}^{s}\geq \liminf_{N \rightarrow \infty} \frac{1}{\ln(N)}\sum_{k=1}^{N} s_{k}. \]
Passons  à:
\[ \liminf_{N \rightarrow \infty} \frac{1}{\ln(N)}\sum_{k=1}^{N} s_{k}  \geq e^{-1} \liminf_{s \rightarrow 1^{+}} (s-1)\sum_{k=1}^{\infty} s_{k}^{s}. \]
Sans perte de généralité on suppose $s_{k} \leq 1$.
Soit $N$ un entier positif, notons 
\[ G_{N}=\left(\sum_{k=1}^{N} \frac{1}{k}\right)^{-1} \]
par Jensen, pour $s<1$,on a:
\[ \left(\sum_{k=1}^{N} \frac{s_{k}}{k} G_{N} \right)^{s} \geq  \sum_{k=1}^{N} \frac{1}{k} G_{N} s_{k}^{s} \]
et en remplacant $s_{k}$ par $ks_{k}$ et comme $s_{k}<1$, on obtient
\[ \left(\sum_{k=1}^{N} s_{k}  G_{N} \right)^{s} \geq  \sum_{k=1}^{N} k^{s-1} G_{N} s_{k}^{s} \geq \sum_{k=1}^{N} k^{s-1} G_{N} s_{k}^{t} \]
où $t>1$.
Mais comme $s<1$
\[ \sum_{k=1}^{N} k^{s-1} G_{N} s_{k}^{t} \geq N^{s-1}G_{N}\sum_{k=1}^{N} s_{k}^{t}.
\]

On a
\[\left( \frac{1}{\ln(N)}\sum_{k=1}^{N} s_{k}  \right)^{s} \geq \frac{N^{s-1}G_{N}^{1-s}}{\ln(N)^{s}(t-1)}\left[ (t-1)\sum_{k=1}^{N} s_{k}^{t}\right]. 
\]
Posons\[ s= 1-\frac{1}{\ln(N)}\] et \[t= 1+\frac{1}{\ln(N)}\] on a
\[ \frac{N^{s-1}G_{N}^{1-s}}{\ln(N)^{s}(t-1)} = \frac{G_{N}^{-s}}{\ln(N)^{s}} \frac{N^{s-1}G_{N}}{(t-1)}. \]
Mais
\[\frac{G_{N}^{-s}}{\ln(N)^{s}}=\left(\frac{\ln(N)+\gamma+o(1)}{\ln(N)}\right)^{1-\frac{1}{\ln(N)}} \rightarrow 1 \]
et
\[ \frac{N^{s-1}G_{N}}{(t-1)} =\frac{e^{-1}}{(t-1)(\ln(N)+\gamma+o(1))} \rightarrow e^{-1}. \]
Ainsi 
\begin{eqnarray*}
\liminf_{N \rightarrow \infty } \frac{1}{\ln(N)}\sum_{k=1}^{N} s_{k} &\geq& e^{-1} \liminf_{N \rightarrow \infty} (t-1)\sum_{k=1}^{N} s_{k}^{t} \\
&\geq& e^{-1} \liminf_{r \rightarrow 1^{+}} (r-1)\sum_{k=1}^{N} s_{k}^{r}.
\end{eqnarray*}

\end{proof}

On obtient immédiatement:
\begin{proposition}
\label{connes1}
Soit $T \in \mathcal{S}^{1}_{+}$, alors:
\begin{displaymath}
e^{-1} \liminf_{s  \rightarrow 1}  (s-1) ||T||_{\mathcal{S}^{s}}^{s} \leq Tr_{\omega}(T) \leq  e \limsup_{s  \rightarrow 1} (s-1) ||T||_{\mathcal{S}^{s}}^{s}
\end{displaymath}
où de façon équivalente, pour reprendre la notation de Connes, Li et Russo
\begin{displaymath}
e^{-1} \liminf_{s  \rightarrow 1} (s-1) \zeta_{T}(s) \leq Tr_{\omega}(T) \leq e \limsup_{s  \rightarrow 1} (s-1) \zeta_{T}(s).
\end{displaymath}
\end{proposition}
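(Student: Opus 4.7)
The plan is to assemble the proposition from the results already established earlier in this section, namely the two preceding propositions relating $\limsup/\liminf$ of $\frac{1}{\ln N}\sum_{k=1}^{N} s_{k}$ and of $(s-1)\sum s_{k}^{s}$, together with Lemma \ref{encadrementtrace1} which sandwiches $Tr_{\omega}(T)$ between $\liminf$ and $\limsup$ of $\sigma_{N}(T)/\ln N$. Since $T$ is a positive compact operator in $\mathcal{S}^{1}_{+}$, its singular values $(s_{k}(T))_{k}$ form a decreasing sequence tending to $0$, so the preceding propositions apply directly to the sequence $s_{k}=s_{k}(T)$. Moreover one has the identification $\zeta_{T}(s)=\sum_{k}s_{k}(T)^{s}=\|T\|_{\mathcal{S}^{s}}^{s}$, which will let me pass freely between the two formulations in the statement.

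For the upper bound I would simply chain two inequalities. First, Lemma \ref{encadrementtrace1} gives
$$Tr_{\omega}(T)\leq \limsup_{N \to \infty}\frac{1}{\ln N}\sigma_{N}(T),$$
and the first of the preceding propositions, applied to the sequence of singular values, yields
$$\limsup_{N \to \infty}\frac{1}{\ln N}\sum_{k=1}^{N}s_{k}(T)\leq e\limsup_{s\to 1^{+}}(s-1)\sum_{k=1}^{\infty}s_{k}(T)^{s}=e\limsup_{s\to 1^{+}}(s-1)\zeta_{T}(s).$$
Chaining these produces the right-hand inequality of the proposition. The left-hand inequality is handled in exactly the same way, replacing $\limsup$ by $\liminf$ and $e$ by $e^{-1}$ via the second preceding proposition. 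The negligible index shift between $\sigma_{N}(T)=\sum_{k=0}^{N-1}s_{k}(T)$ and $\sum_{k=1}^{N}s_{k}(T)$ disappears in the asymptotic and can be absorbed silently.

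I do not anticipate any substantial obstacle: all the delicate analysis --- in particular the choice of test exponents $s=1\pm 1/\ln N$ which is precisely what produces the constants $e^{\pm 1}$ --- has already been carried out in the two preceding propositions. The only points to verify before invoking them are that $(s_{k}(T))$ is decreasing and tends to $0$ (which holds because $T$ is compact) and that $\zeta_{T}(s)=\|T\|_{\mathcal{S}^{s}}^{s}$, both immediate from the definitions recalled earlier. The proof will therefore reduce to one short paragraph appealing to Lemma \ref{encadrementtrace1} and to the two preceding propositions.
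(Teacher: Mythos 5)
Your proposal is correct and is exactly the paper's argument: the paper states the proposition with the single remark \emph{On obtient imm\'ediatement}, i.e.\ it is obtained by chaining Lemme \ref{encadrementtrace1} with the two preceding propositions applied to the decreasing null sequence of singular values, using $\zeta_{T}(s)=\|T\|_{\mathcal{S}^{s}}^{s}$. Your only (sensible) addition is to make explicit the positivity of $T$ needed for Lemme \ref{encadrementtrace1}, which the paper leaves implicit.
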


\section{L'espace H'}
La section précédente nous invite à regarder la limite de $(p-1)||H_{\bar{f}}||_{\mathcal{S}^{p}}$.  Comme les normes $||H_{\bar{f}}||_{\mathcal{S}^{p}}$ et $||f||_{\mathcal{B}^{p}}$ sont équivalentes, nous nous intéressons au sup de $(p-1)||f||_{\mathcal{B}^{p}}$. Tous les résultats sur les espaces de Hardy utilisés dans cette section se trouvent dans \cite{RuARC}.

\begin{definition}
$H'$ est défini comme suit:
\begin{displaymath}
H'=\{h ; h' \in H^{1}\}. 
\end{displaymath}
\end{definition}

\textbf{Proposition}(J. Arazy, S. Fisher et J. Peetre \cite{AFP85})

L'espace $H'$ muni de la semi norme
\begin{displaymath}
||h||_{H'}=\int_{\T} |h'(e^{i\theta})| d\theta 
\end{displaymath}
est Möbius invariant au sens de \cite{AFP85}, et donc complet.\\
\newline
On s'inspire de \cite{AFP85} pour donner une description alternative:
\begin{proposition}
\label{descriptionhardy}
\begin{displaymath}
H'=\{h \in \mathcal{H}ol(\D); \sup_{p \in]1;2]} (p-1)||h||_{\mathcal{B}^{p}} < \infty\}
\end{displaymath}
et 
\begin{eqnarray*}
||h||_{H'} &=&||h'||_{H^{1}}\\
 &=& \lim_{p\rightarrow 1^{+}} (p-1)||h||_{\mathcal{B}^{p}}.\\
\end{eqnarray*}
\end{proposition}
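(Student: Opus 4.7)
The plan is to write $||h||_{\mathcal{B}^p}^p$ in polar form,
\[
||h||_{\mathcal{B}^p}^p = 2\int_0^1 r(1-r^2)^{p-2}N_p(r)\,dr,\quad N_p(r):=\int_0^{2\pi}|h'(re^{i\theta})|^p\frac{d\theta}{2\pi},
\]
with $N_p$ nondecreasing in $r$ by subharmonicity of $|h'|^p$, and then treat the lower and upper bounds separately. Since $(p-1)||h||_{\mathcal{B}^p}=\bigl[(p-1)^{p-1}(p-1)||h||_{\mathcal{B}^p}^p\bigr]^{1/p}$ and $(p-1)^{p-1}\to 1$ as $p\to 1^+$, it suffices to identify the limit of $(p-1)||h||_{\mathcal{B}^p}^p$.

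For the lower bound, Jensen's inequality yields $M_p(h',r)\geq M_1(h',r)$ (with $M_q(h',r)^q=\int_0^{2\pi}|h'(re^{i\theta})|^q\,d\theta/(2\pi)$), so for any $\rho\in(0,1)$,
\[
(p-1)||h||_{\mathcal{B}^p}^p\geq 2(p-1)M_1(h',\rho)^p\int_\rho^1 r(1-r^2)^{p-2}\,dr = M_1(h',\rho)^p(1-\rho^2)^{p-1}.
\]
Letting $p\to 1^+$, then $\rho\to 1^-$, gives $\liminf_{p\to 1^+}(p-1)||h||_{\mathcal{B}^p}^p\geq ||h'||_{H^1}$. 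This already establishes one direction: if $\sup_{p\in]1,2]}(p-1)||h||_{\mathcal{B}^p}<\infty$ then $h'\in H^1$.

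For the upper bound and the explicit value of the limit, I integrate by parts using $2r(1-r^2)^{p-2}\,dr=-\frac{1}{p-1}d[(1-r^2)^{p-1}]$, which yields the key identity
\[
(p-1)||h||_{\mathcal{B}^p}^p = |h'(0)|^p + \int_0^1 (1-r^2)^{p-1}\,dN_p(r),
\]
provided the $r=1$ boundary term $(1-r^2)^{p-1}N_p(r)$ vanishes; this is automatic as soon as $h'\in H^p$. For $h$ a polynomial, $h'$ is continuous on $\bar{\D}$, $N_p\to M_1(h',\cdot)$ uniformly, and $(1-r^2)^{p-1}\to 1$ uniformly on $[0,1-\delta]$, so passing to the limit in the Stieltjes integral gives
\[
\lim_{p\to 1^+}(p-1)||h||_{\mathcal{B}^p}^p = |h'(0)| + \bigl(M_1(h',1^-)-M_1(h',0)\bigr) = ||h'||_{H^1}.
\]

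To extend to arbitrary $h\in H'$, approximate by the Ces\`aro means $\sigma_n(h)$ of the Taylor series: these are polynomials with $\sigma_n(h)'\to h'$ in $H^1$ and $||\sigma_n(h)'||_{H^1}\leq ||h'||_{H^1}$. The triangle inequality then reduces the problem to a uniform-in-$p$ comparison
\[
(p-1)||g||_{\mathcal{B}^p}\lesssim ||g'||_{H^1},\quad g\in H',\ p\in]1,2].
\]
This step is the main obstacle: the crude bound $(p-1)||g||_{\mathcal{B}^p}^p\leq ||g'||_{H^p}^p$ deduced from the identity is not available when $g'\in H^1\setminus\bigcup_{p>1}H^p$. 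A finer argument should rely on the inner-outer factorisation $g'=B\cdot O$ and the pointwise estimate $|g'(z)|\leq |O(z)|\leq P[|g'|](z)$ coming from Jensen, combined with a careful truncation near $|z|=1$ to control $(P[|g'|])^p$ against the Besov weight.
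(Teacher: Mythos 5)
Your skeleton is essentially the paper's: polar decomposition against the probability measures $d\lambda_p(r)=2(p-1)r(1-r^2)^{p-2}dr$, monotonicity of the integral means, Jensen for one inequality, integration by parts for the other. The lower bound is complete and correct: $\liminf_{p\to1^+}(p-1)\|h\|_{\mathcal{B}^p}^p\geq M_1(h',\rho)$ for every $\rho$, hence $\geq\|h'\|_{H^1}$, which gives the inclusion $\{\sup_{p\in]1;2]}(p-1)\|h\|_{\mathcal{B}^p}<\infty\}\subset H'$. The problem is the other half. You establish $\lim_{p\to1^+}(p-1)\|h\|_{\mathcal{B}^p}^p=\|h'\|_{H^1}$ only for polynomials, and you reduce the general case to the uniform comparison $(p-1)\|g\|_{\mathcal{B}^p}^p\lesssim\|g'\|_{H^1}^p$ for $p\in]1;2]$ --- which you then explicitly leave unproved, offering only a one-sentence sketch via inner--outer factorisation. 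That uniform estimate is the actual mathematical content of this direction of the proposition: it is a quantitative Hardy--Littlewood embedding $H^1\subset\mathcal{B}^p$ with a constant of the precise order $(p-1)^{-1/p}$. The naive route $M_p(g',r)^p\leq M_\infty(g',r)^{p-1}M_1(g',r)\lesssim\left((1-r)^{-1}\|g'\|_{H^1}\right)^{p-1}\|g'\|_{H^1}$ produces a divergent $r$-integral, so the constant genuinely must be extracted more carefully; nothing in your text does this. Consequently neither the inclusion $H'\subset\{\sup_p(p-1)\|h\|_{\mathcal{B}^p}<\infty\}$ nor the value of the limit for general $h\in H'$ is established.

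Your diagnosis of where the difficulty lies is nonetheless accurate, and it is worth noting that the paper's own upper bound does not escape it either: the paper bounds $\int_0^1u_p\,d\lambda_p\leq\limsup_{a\to1}u_p(a)=\int_{\T}|h'_*|^p\,d\theta$ and then lets $p\to1^+$ by an unjustified interchange of $\limsup$ and integral; this chain degenerates to the vacuous bound $\leq+\infty$ as soon as $h'_*\in L^1\setminus\bigcup_{p>1}L^p$ (for instance $|h'_*(e^{i\theta})|\sim|\theta|^{-1}\log^{-2}(1/|\theta|)$ near $\theta=0$). So simply copying the paper would not close your gap. To finish, you need to actually prove the uniform bound $(p-1)^{1/p}\|g\|_{\mathcal{B}^p}\lesssim\|g'\|_{H^1}$ (for example by the subordination arguments of Arazy, Fisher and Peetre, which the paper cites for precisely this limit); once that is in hand, your Ces\`aro-mean approximation correctly upgrades the polynomial case to all of $H'$.
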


\begin{proof}
Soit \[u_{p}=\int_{0}^{2\pi} |h'(re^{i\theta})|^{p} d\theta\] et \[d\lambda_{p}(r)=2(p-1)r(1-r^{2})^{p-2}dr\] suite de mesure de probabilité sur $[0;1[$. Avec ces notations, on a donc \[(p-1)||h||_{\mathcal{B}^{p}}^{p} = \int_{0}^{1} u_{p}d\lambda_{p}(r).\] On procède par double inégalité, soit $a<1$

\begin{eqnarray*}
\int_{0}^{a}u_{p} d\lambda_{p}&=& \int_{0}^{a} u_{p} \frac{d}{dr}\left(-(1-r^{2})^{p-1}\right)dr \\
&=& \int_{0}^{a} \left(\frac{d}{dr}(u_{p})\right) \left((1-r^{2})^{p-1}\right)dr + \left[ -u_{p}(1-r^{2})^{p-1}\right]_{0}^{a}\\
&\leq& \int_{0}^{a} \frac{d}{dr}(u_{p}) dr  -u_{p}(a)(1-a^{2})^{p-1}+u_{p}(0)\\
&\leq& u_{p}(a)-u_{p}(0) -u_{p}(a)(1-a^{2})^{p-1}+u_{p}(0)\\
&=& \left[1-(1-a^{2})^{p-1}\right]u_{p}(a).\\
\end{eqnarray*}
Ainsi, si $h'\in H^{1}$, $h'$ a une limite radiale pp que l'on note $h'_{*}$
\begin{eqnarray*}
\int_{0}^{1}u_{p} d\lambda_{p}&\leq & \limsup_{a\rightarrow1}\left[1-(1-a^{2})^{p-1}\right]u_{p}(a)\\
&\leq& \limsup_{a\rightarrow1} \int_{0}^{2\pi} |h'(ae^{i\theta})|^{p} d\theta\\
&\leq& \int_{0}^{2\pi} \limsup_{a\rightarrow1} |h'(ae^{i\theta})|^{p} d\theta\\
&=& \int_{0}^{2\pi} |h_{*}'(e^{i\theta})|^{p} d\theta\\
&=& ||h'_{*}||_{H^{p}}^{p}.\\
\end{eqnarray*}
On a donc
\begin{eqnarray*}
\limsup_{p\rightarrow 1^{+}} \int_{0}^{1}u_{p} d\lambda_{p}&\leq & \limsup_{p\rightarrow 1^{+}} \int_{0}^{2\pi} |h_{*}'(e^{i\theta})|^{p} d\theta\\
&\leq& \int_{0}^{2\pi}\limsup_{p\rightarrow 1^{+}}  |h_{*}'(e^{i\theta})|^{p} d\theta \\ 
&=& \int_{0}^{2\pi} |h'_{*}(e^{i\theta})| d\theta\\ 
&=& ||h'_{*}||_{H^{1}}.\\
\end{eqnarray*}
Ainsi
\[ h'\in H^{1} \Rightarrow \sup_{p\in]1;2]} \int_{0}^{1}u_{p} d\lambda_{p}(r)< \infty. \]

Réciproquement, supposons que \[ \sup_{p\in]1;2]} \int_{0}^{1}u_{p} d\lambda_{p}(r) < \infty.\]	 Par Hölder, on a:
\begin{eqnarray*}
\left(\int_{\D} |h'| d\theta d\lambda_{p}(r)\right)^{p} &\leq& \left(\int_{\D} |h'|^{p} d\theta d\lambda_{p}(r) \right) \left(\int_{\D}  d\theta d\lambda_{p}(r) \right)^{p/q} \\
&=&\left(\int_{\D} |h'|^{p} d\theta d\lambda_{p}(r) \right) 
\end{eqnarray*}
et donc
\begin{eqnarray*}
\liminf_{p\rightarrow 1^{+}} \left(\int_{\D} |h'| d\theta d\lambda_{p}(r)\right)^{p} &\leq& \liminf_{p\rightarrow 1^{+}} \left(\int_{\D} |h'|^{p} d\theta d\lambda_{p}(r) \right).
\end{eqnarray*}
Mais par \cite{AFP85}
\begin{eqnarray*}
\lim_{p\rightarrow 1^{+}} \left(\int_{\D} |h'| d\theta d\lambda_{p}(r)\right)^{p} &=& ||h'||_{H^{1}}
\end{eqnarray*}
ce qui donne
\begin{eqnarray*}
||h'||_{H^{1}} &\leq& \liminf_{p\rightarrow 1^{+}} \int_{0}^{1}u_{p} d\lambda_{p}(r)\\
&\leq& \sup_{p \in ]1;2]} \int_{0}^{1}u_{p} d\lambda_{p}(r).\\
\end{eqnarray*}

On a donc montré que \[h' \in H^{1} \Leftrightarrow \sup_{p\in]1;2]} \int_{0}^{1}u_{p} d\lambda_{p}(r) < \infty\] et que dans ce cas

\begin{eqnarray*}
||h'||_{H^{1}} &\leq& \liminf_{p\rightarrow 1^{+}} \int_{0}^{1}u_{p} d\lambda_{p}(r)\\
&\leq& \limsup_{p\rightarrow 1^{+}} \int_{0}^{1}u_{p} d\lambda_{p}(r) \\
&\leq& ||h||_{H'}\\
\end{eqnarray*}
c'est à dire
\begin{eqnarray*}
||h||_{H'}&=& \lim_{p\rightarrow 1^{+}} \int_{0}^{1}u_{p} d\lambda{p}(r).\\
\end{eqnarray*}
\end{proof}

\begin{definition}

On introduit sur $H'$ la norme

\begin{displaymath}
||f||_{sup}=\sup_{p \in]1;2[}  (p-1)^{1/p} ||f||_{\mathcal{B}^{p}}. 
\end{displaymath}
\end{definition}

\begin{lemme}
L'espace $H'$ muni de la norme $||.||_{sup}$ est complet.
\end{lemme}
\begin{proof}
Soit $(f_{n})$ une suite de Cauchy de $H'$, elle est de Cauchy dans chaque $\mathcal{B}^{p}$, $1<p<2$, donc converge dans $\mathcal{B}^{p}$ vers $f^{p}$. De l'inégalité 
\begin{displaymath}
||f||_{\mathcal{D}} \leq c_{p} ||f||_{\mathcal{B}^{p}}
\end{displaymath}
on déduit que les $f^{p}$ sont égales, on note cette limite $f$. On a noté ici $\mathcal{D}$ l'espace de Dirichlet qui est en fait l'espace de Besov $\mathcal{B}^{2}$. 
Enfin, soit $\epsilon>0$, pour $n,k$ assez grand on a:
\begin{eqnarray*}
\epsilon &>& \limsup_{k} \sup_{p} (p-1)^{1/p}||f_{n}-f_{k}||_{\mathcal{B}^{p}}\\
&\geq& \sup_{p} (p-1)^{1/p} \limsup_{k} ||f_{n}-f_{k}||_{\mathcal{B}^{p}} \\
&=& ||f- f_{n}||_{sup}.
\end{eqnarray*}
\end{proof}

\begin{proposition}
\label{equivalencehardy}
Pour tout $f \in H'$, on a
\begin{displaymath}
\sup_{p \in]1;2[}  (p-1)^{1/p} ||f||_{\mathcal{B}^{p}} \simeq \lim_{p \rightarrow 1 }  (p-1)^{1/p} ||f||_{\mathcal{B}^{p}} 
\end{displaymath}

\end{proposition}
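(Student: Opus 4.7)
Le plan consiste \`a combiner la Proposition~\ref{descriptionhardy} avec le th\'eor\`eme d'isomorphisme de Banach appliqu\'e \`a deux normes compl\`etes sur $H'$. D'abord, je montrerais que la limite du membre de droite existe effectivement et vaut $||f||_{H'}$. La Proposition~\ref{descriptionhardy} donne $(p-1)||f||_{\mathcal{B}^p}^p \to ||f||_{H'}$ quand $p \to 1^+$, et en \'ecrivant
\[
(p-1)^{1/p}||f||_{\mathcal{B}^p} = \bigl((p-1)||f||_{\mathcal{B}^p}^p\bigr)^{1/p}
\]
avec $1/p \to 1$, la continuit\'e conjointe de $(x,q) \mapsto x^q$ donne la conclusion lorsque $||f||_{H'} > 0$, tandis que le cas $||f||_{H'}=0$ se traite par un encadrement \'el\'ementaire (le $(1/p)$-i\`eme puissance d'une quantit\'e tendant vers $0$ tend aussi vers $0$).

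Cela \'etabli, une moiti\'e de l'\'equivalence est imm\'ediate : par d\'efinition du supremum,
\[
\sup_{p \in ]1;2[} (p-1)^{1/p}||f||_{\mathcal{B}^p} \geq \lim_{p \to 1^+} (p-1)^{1/p}||f||_{\mathcal{B}^p} = ||f||_{H'}.
\]
Pour l'in\'egalit\'e inverse, qui constitue le v\'eritable contenu, j'invoquerais le th\'eor\`eme d'isomorphisme de Banach. L'espace $H'$ est complet \`a la fois pour $||\cdot||_{H'}$ (par la proposition d'Arazy--Fisher--Peetre cit\'ee plus haut) et pour $||\cdot||_{sup}$ (par le lemme imm\'ediatement pr\'ec\'edent). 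L'in\'egalit\'e ci-dessus montre que l'application identit\'e $(H', ||\cdot||_{sup}) \to (H', ||\cdot||_{H'})$ est continue ; \'etant une bijection entre deux espaces de Banach, son inverse est automatiquement born\'e, fournissant une constante $C > 0$ telle que $||f||_{sup} \leq C||f||_{H'}$, ce qui conclut.

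La principale subtilit\'e \`a traiter avec soin est le passage de la convergence de $(p-1)||f||_{\mathcal{B}^p}^p$ \`a celle de $(p-1)^{1/p}||f||_{\mathcal{B}^p}$, notamment dans le cas d\'eg\'en\'er\'e ; une fois cela acquis, le reste de l'argument se r\'eduit \`a une application directe du th\'eor\`eme de l'application ouverte, qui \'evite toute estimation explicite entre les deux normes.
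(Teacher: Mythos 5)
Votre démonstration est correcte et suit exactement la voie du papier : l'inégalité $\lim \leq \sup$ est immédiate, et l'inégalité inverse s'obtient par le théorème d'isomorphisme de Banach appliqué à l'identité entre $(H', ||\cdot||_{sup})$ et $(H', ||\cdot||_{H'})$, toutes deux complètes d'après les résultats précédents — c'est précisément l'argument que le papier résume par « Résulte du théorème de Banach ». Votre rédaction est simplement plus détaillée, notamment sur l'identification de la limite avec $||f||_{H'}$ via la Proposition~\ref{descriptionhardy}.
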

\begin{proof}
Résulte du théorème de Banach.
\end{proof}

\section{Preuves}

On montre dans cette section les théorèmes. Pour cela, on prouve que pour tout $\alpha>-1$ , $ \mathcal{D}^{1}_{\alpha} \subset H'$ et on met en lien les différents opérateurs de Hankel pour conclure.\\
\newline
On donne une nouvelle preuve de l'inclusion $\mathcal{D}^{1} \subset H'$:
\begin{proposition}  On a:
\[\mathcal{D}^{1} \subset H'. \]
\end{proposition}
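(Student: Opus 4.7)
The plan is to chain together the characterization of $H'$ given in Proposition \ref{descriptionhardy} with the $\zeta$-norm description of $\mathcal{S}^{1}_{+}$ from Proposition \ref{lirusso1}, using the Arazy--Fisher--Peetre equivalence between Schatten norms of $H_{\bar{f}}$ and Besov norms of $f$ as the bridge.

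First, I would start from $f\in\mathcal{D}^{1}$, i.e.\ $|H_{\bar{f}}|\in\mathcal{S}^{1}_{+}$. Applying Proposition \ref{lirusso1} to the sequence of singular values $(s_{n}(H_{\bar f}))_{n}$, the norms $\|\cdot\|_{1,+}$ and $\|\cdot\|_{\zeta}$ on $l^{1,+}$ are equivalent, so
\begin{displaymath}
\sup_{p\in\,]1;2]}(p-1)^{1/p}\,\|H_{\bar{f}}\|_{\mathcal{S}^{p}}
=\sup_{p\in\,]1;2]}\Bigl((p-1)\sum_{n}s_{n}(H_{\bar f})^{p}\Bigr)^{1/p}<\infty.
\end{displaymath}

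Second, I would invoke the result of Arazy--Fisher--Peetre \cite{AFP} recalled just after \eqref{besovclassique}: for holomorphic $f$ and $p>1$, $\|H_{\bar{f}}\|_{\mathcal{S}^{p}}\simeq\|f\|_{\mathcal{B}^{p}}$. The substantive point is that the implicit constants remain bounded as $p\to 1^{+}$ (this comes from tracking the constants in the atomic decomposition of $\mathcal{B}^{p}$ based on the sequences provided by Lemme \ref{propri�t�satomiques}, which are independent of $p$). Combining with the previous step,
\begin{displaymath}
\sup_{p\in\,]1;2]}(p-1)^{1/p}\,\|f\|_{\mathcal{B}^{p}}<\infty,
\end{displaymath}
and, multiplying by $(p-1)^{1-1/p}\to 1$, also $\sup_{p\in\,]1;2]}(p-1)\,\|f\|_{\mathcal{B}^{p}}<\infty$.

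Third, I would conclude by Proposition \ref{descriptionhardy}, which says exactly that this last supremum condition characterizes $f\in H'$. Hence $\mathcal{D}^{1}\subset H'$, as claimed.

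The main obstacle is the uniformity of the Arazy--Fisher--Peetre equivalence as $p\to 1^{+}$: the standard proof of $\|H_{\bar{f}}\|_{\mathcal{S}^{p}}\gtrsim\|f\|_{\mathcal{B}^{p}}$ proceeds via discretisation along an atomic sequence $(\lambda_{n})$ of the disc, and one must verify that the resulting constants depend only on the covering number $N$ and the radius $r$ of Lemme \ref{propri�t�satomiques}, not on $p$. Once this uniform control is in hand, the rest of the argument is purely formal manipulation of the three propositions cited above.
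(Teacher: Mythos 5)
Your architecture is the same as the paper's: reduce membership of $H_{\bar f}$ in $\mathcal{S}^1_+$ to $\sup_{p\in]1;2]}(p-1)\|H_{\bar f}\|_{\mathcal{S}^p}^p<\infty$ (Lemme \ref{lirusso} / Proposition \ref{lirusso1}), transfer this bound to the Besov scale, and conclude with Proposition \ref{descriptionhardy}. The formal part of your argument is fine, including the passage between $(p-1)^{1/p}\|f\|_{\mathcal{B}^p}$ and $(p-1)\|f\|_{\mathcal{B}^p}$ via the factor $(p-1)^{1-1/p}\to1$. But the one step that carries all the content --- the uniformity in $p$, as $p\to1^{+}$, of the Arazy--Fisher--Peetre comparison --- is exactly the step you do not prove; you only announce that ``one must verify'' that the constants coming from the atomic decomposition are independent of $p$. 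That verification is the entire body of the paper's proof, and it is not done by rereading the covering lemma: the paper establishes the one-sided bound $\|f\|_{\mathcal{B}^p}\lesssim\|H_{\bar f}\|_{\mathcal{S}^p}$ (the only direction needed here) by first proving the pointwise estimate $(1-|z|^2)|f'(z)|\lesssim F(z)$, where $F(z)^2=\inf_{h\in\mathcal{A}^2}\frac{1}{|D(z)|}\int_{D(z)}|\bar f-h|^2\,dA$ is Luecking's mean-oscillation function (this uses the subharmonicity-type estimate and the invariance under $\varphi_z$), and then checking, by inspection of Luecking's Theorem 4, that $\|F\|_{L^p(d\lambda)}\lesssim\|H_{\bar f}\|_{\mathcal{S}^p}$ holds with a constant independent of $p\in]1;2]$. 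Without carrying out this (or an equivalent) argument, the proof is incomplete, since for fixed $p>1$ the statement $\|f\|_{\mathcal{B}^p}\lesssim\|H_{\bar f}\|_{\mathcal{S}^p}$ gives nothing after multiplication by $(p-1)$ and passage to the supremum.

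A smaller remark: you invoke the full two-sided equivalence $\|H_{\bar f}\|_{\mathcal{S}^p}\simeq\|f\|_{\mathcal{B}^p}$, but only the lower bound on $\|H_{\bar f}\|_{\mathcal{S}^p}$ is needed for the inclusion $\mathcal{D}^1\subset H'$; asserting uniformity of both directions is an unnecessary extra burden, and the paper is careful to prove only the direction it uses.
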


\begin{proof}
Nous montrons que nous avons, uniformément en $p$:
\[||f||_{\mathcal{B}^{p}} \lesssim ||H_{\bar{f}}||_{\mathcal{S}^{p}}\] ce qui permettra d'affirmer que $\mathcal{D}^{1}\subset H'$. Soit $f$ une fonction holomorphe dans l'espace de Dixmier, $f$ est donc dans chaque classe de Schatten $p>1$ et donc $f$ est dans l'intersection de tous les espaces de Besov d'exposant $p>1$. On note $D(z)$ le disque hyperbolique de centre $z$ et de rayon $r$ fixé. On part de l'inégalité \cite{ZABS} p 330 par exemple, pour $\lambda \in \C$
\begin{eqnarray*}
(1-|z|^{2})|f'(z)| &\lesssim& \int_{D(0)} |f \circ \varphi_{z}(w)-\lambda| dA(w) \\
&\lesssim& \left( \int_{D(0)} |f \circ \varphi_{z}(w)-\lambda|^{2} dA(w) \right)^{1/2} \\
&\lesssim& \left( \int_{D(z)} |f(w)-\lambda|^{2} |\varphi'_{z}(w)|^{2}dA(w) \right)^{1/2} \\
&\simeq& \left( \frac{1}{|D(z)|} \int_{D(z)} |f(w)-\lambda|^{2} dA(w) \right)^{1/2} \\
&\simeq& \left( \frac{1}{|D(z)|} \int_{D(z)} |\bar{f}(w)-\bar{\lambda}|^{2} dA(w) \right)^{1/2}. \\
\end{eqnarray*}
Passant à l'inf sur tous les $\lambda$ et utilisant la proposition 1 p 261 de \cite{L}, on obtient:
\begin{eqnarray} \nonumber
(1-|z|^{2})|f'(z)| &\lesssim& \inf_{\lambda \in \C} \left( \frac{1}{|D(z)|} \int_{D(z)} |\bar{f}(w)-\bar{\lambda}|^{2} dA(w) \right)^{1/2} \\ \nonumber
&\lesssim& \inf_{h \in \mathcal{A}^{2}} \left( \frac{1}{|D(z)|} \int_{D(z)} |\bar{f}(w)-h(w)|^{2} dA(w) \right)^{1/2} \\ \nonumber
&=& F(z)
\end{eqnarray}
où $F$ est la fonction de \cite{L} définie p 252:
\[ F(z)^{2}=\inf \left\{ \frac{1}{D(z)}\int_{D(z)} |\bar{f}-h|^{2} dA ; h \in \mathcal{A}^{2} \right\}. \]
 Mais en regardant attentivement la preuve du théorème 4 p 262 \cite{L}, on voit qu'indépendamment en $p$, on a:
\[ ||F||_{L^{p}(d\lambda)} \lesssim ||H_{\bar{f}}||_{\mathcal{S}^{p}}. \]
Ainsi 
\begin{eqnarray} \label{inégalitécaszero}
||f||_{\mathcal{B}^{p}} =||(1-|z|^{2})|f'(z)|||_{L^{p}(d\lambda)} \lesssim  ||F||_{L^{p}(d\lambda)} \lesssim ||H_{\bar{f}}||_{\mathcal{S}^{p}}
\end{eqnarray}
d'où
\[ \sup_{p \in ]1;2]} (p-1) ||f||_{\mathcal{B}^{p}}  \lesssim \sup_{p \in ]1;2]} (p-1)||H_{\bar{f}}||_{\mathcal{S}^{p}} \]
ce qui montre l'inclusion $\mathcal{D}^{1} \subset H'$, via la proposition \ref{descriptionhardy}.\\
\end{proof}
Passsons au cas général:
\begin{lemme}
Pour tout $\alpha>-1$
\[\mathcal{D}^{1}_{\alpha} \subset H'. \] 
\end{lemme}

\begin{proof}
Dans \cite{L}, il est dit p 267 que le théorème 4 reste inchangé modulo les adaptations dûes au poids. On précise les quelques modifications afin d'être complet.
Reprenons la preuve de \cite{L} et remarquons que l'opérateur $A$ est l'opérateur $-T$ de \cite{Z} p 66 de $l^{2}$ dans $\mathcal{A}^{2}$:
\[ A((a_{k}))(z)=\sum_{k=1}^{\infty} a_{k} \frac{(1-|\xi_{k}|^{2})}{(1-\bar{\xi}_{k}z)^{2}} \]  où $(\xi_{k})$ est une suite atomique. Cela résulte simplement de:
\[ -\varphi'_{\xi_{k}}(z)= \frac{(1-|\xi_{k}|^{2})}{(1-\bar{\xi}_{k}z)^{2}}.\]
Nous définissons donc $A_{\alpha}$ de $l^{2}$ dans $\mathcal{A}^{2,\alpha}$ comme suit:
\[ A_{\alpha} ( (a_{k})) = \sum_{k=1}^{\infty} a_{k} \frac{(1-|\xi_{k}|^{2})^{(2+\alpha)/2}}{(1-\bar{\xi}_{k}z)^{2+\alpha}}\]
et l'on montre qu'il est borné de la même façon que dans \cite{Z} p 66.  Soit donc $f \in \mathcal{H}^{\infty}$ une fonction holomorphe bornée du disque. On a:
\[ \langle A_{\alpha}( (a_{k})) , f \rangle_{\mathcal{A}^{2,\alpha}} = \sum_{k=1}^{\infty}  a_{k} (1-|\xi_{k}|^{2})^{(2+\alpha)/2}  \bar{f}(\xi_{k})\] 
Ainsi par Cauchy-Schwarz
\[ |\langle A_{\alpha}( (a_{k})) , f \rangle_{\mathcal{A}^{2,\alpha}}| \leq ||(a_{k})||_{l^{2}} ||(1-|\xi_{k}|^{2})^{(2+\alpha)/2}  \bar{f}(\xi_{k})||_{l^{2}} \]
Comme ${H}^{\infty}$ est dense dans  $\mathcal{A}^{2,\alpha}$, il suffit de montrer que 
\[ ||(1-|\xi_{k}|^{2})^{(2+\alpha)/2}  \bar{f}(\xi_{k})||_{l^{2}} \lesssim ||f||_{\mathcal{A}^{2,\alpha}} \quad, \quad f \in {H}^{\infty} \]
pour conclure.\\
Soit $z \in \D$ et $r>0$, par le lemme 4.3.4 p 61 de \cite{Z}:
\[ \frac{1}{|D(z,r)|} \int_{D(z,r)} (1-|w|^{2})^{\alpha}|f(w)|^{2} dA(w) \simeq \frac{(1-|z|^{2})^{\alpha}}{|D(z,r)|} \int_{D(z,r)} |f(w)|^{2} dA(w) \]
et par la proposition 4.3.8 p 62 de \cite{Z} on déduit que:
\[ (1-|z|^{2})^{\alpha}|f(z)|^{2} \lesssim \frac{1}{|D(z,r)|} \int_{D(z,r)} (1-|w|^{2})^{\alpha}|f(w)|^{2} dA(w).\]
Comme $|D(z,r)|$ est comparable à $(1-|z|^{2})^{2}$ (lemme 4.3.3 p 60 \cite{Z}) on a:
\[ (1-|z|^{2})^{\alpha+2}|f(z)|^{2} \lesssim \int_{D(z,r)} (1-|w|^{2})^{\alpha}|f(w)|^{2} dA(w).\]
Pour $z=\xi_{k}$ et à l'aide du (3) du lemme \ref{propriétésatomiques}, on obtient
\[ \sum_{k=1}^{\infty} (1-|\xi_{k}|^{2})^{\alpha+2}|f(\xi_{k})|^{2} \lesssim \int_{\D} (1-|w|^{2})^{\alpha}|f(w)|^{2} dA(w)\]
ce qui prouve bien:
\[ ||(1-|\xi_{k}|^{2})^{(2+\alpha)/2}  \bar{f}(\xi_{k})||_{l^{2}} \lesssim ||f||_{\mathcal{A}^{2,\alpha}} \quad, \quad f \in {H}^{\infty} \]
et donc $A_{\alpha}$ est borné de $l^{2}$ dans $\mathcal{A}^{2,\alpha}$.\\
On définit aussi l'opérateur $B_{\alpha}$ de $l^{2}$ dans $\mathcal{A}^{2,\alpha}$ qui envoie le $k$-ième vecteur $e_{k}$ de la base canonique de $l^{2}$ sur $c_{k} \chi_{D(\xi_{k})}H_{\bar{f}}^{\alpha}((\varphi'_{k})^{1+\alpha/2})$ où $c_{k}$ est une constante pour que $B_{\alpha}e_{k}$ soit unitaire. Cet opérateur est borné puisque  $(\xi_{k})$ est une suite atomique. Nous avons alors comme dans \cite{L}:
\begin{eqnarray*}
\sum_{k} \left(\int_{D(0)} |H_{\bar{f} \circ \varphi_{k}}^{\alpha}(1)|^{2} dA \right)^{p/2}&=& \sum_{k} \left(\int_{D(0)} |(\varphi'_{k} H_{\bar{f}}^{\alpha}((\varphi'_{k})^{1+\alpha/2}))(\varphi_{k})|^{2} dA\right)^{p/2}\\
&=& \sum_{k} \left(\int_{D(\xi_{k})} | H_{\bar{f}}^{\alpha}((\varphi'_{k})^{1+\alpha/2})|^{2} dA\right)^{p/2}\\
&=&\sum_{k}|\langle B^{*} H^{\alpha}_{\bar{f}} T_{\alpha} e_{k}, e_{k} \rangle|^{p}.\\
&\lesssim& ||H^{\alpha}_{\bar{f}}||_{\mathcal{S}^{p}}^{p}\\
\end{eqnarray*}
où la dernière ligne est justifiée par \cite{GK} p 95.\\
On a aussi, comme au début de la page 265 de \cite{L}:
\[ F(\xi_{k})^{2} \lesssim \int_{D(\xi_{k})} | H_{\bar{f}}^{\alpha}((\varphi'_{k})^{1+\alpha/2})|^{2} dA = \int_{D(0)} |H_{\bar{f} \circ \varphi_{k}}^{\alpha}(1)|^{2} dA \] 
où l'on rappelle que
\[ F(z)^{2}=\inf \left\{ \frac{1}{D(z)}\int_{D(z)} |\bar{f}-h|^{2} dA ; h \in \mathcal{A}^{2} \right\}.\]
Enfin, au bas de la page 266 de \cite{L}:
\[ ||F||_{L^{p}(d\lambda)} \lesssim ||F(\xi_{k})||_{l^{p}}\]
indépendamment en $p>1$. On obtient alors comme dans le cas $\alpha=0$, indépendamment en $p>1$:
\begin{eqnarray}
\label{inclusionpoids}
 ||F||_{L^{p}(d\lambda)} \lesssim || H_{\bar{f}}^{\alpha}||_{\mathcal{S}^{p}}.  
 \end{eqnarray}
Or, on a vu (\ref{inégalitécaszero}) que:
\[ ||f||_{\mathcal{B}^{p}} \simeq  ||F||_{L^{p}(d\lambda)}. \] On conclut alors comme dans le théorème précédent.
\end{proof}

On a donc montré que pour tout $\alpha>-1$, $\mathcal{D}^{1}_{\alpha} \subset H'$. Or, $H' \subset H^{\infty}$ (p 79 \cite{Pa}), l'espace des fonctions holomorphes bornées du disque. Ainsi toute fonction dans $\mathcal{D}^{1}_{\alpha}$ est bornée sur le disque.

\begin{lemme}
Pour tout $\alpha>-1$ et pour tout symbole borné $f$ 
\[H_{\bar{f}} \in \mathcal{S}^{1}_{+} \Leftrightarrow H_{\bar{f}}^{\alpha} \in \mathcal{S}^{1}_{+}.\]
\end{lemme}

\begin{proof}
On essaye de trouver un lien entre la projection de Bergman classique et celles à poids.
 On note $C_{\alpha}$ l'isométrie de $L^{2,\alpha}$ dans $L^{2}$ définie par
\[ C_{\alpha}(f)=(1-|z|)^{\alpha/2}f \]
et $P_{\alpha}$ la projection de Bergman. On regarde l'image d'un monôme par $P_{0}C_{\alpha}$:
\begin{eqnarray*}
P_{0}C_{\alpha}(w^{n})(z)&=&P_{0}((1-|w|^{2})^{\alpha/2}w^{n})(z)\\
&=&\int_{\D}\frac{w^{n}(1-|w|^{2})^{\alpha/2}}{(1-\bar{w}z)^{2}}dA(w)\\
&=&\int_{0}^{1} r^{n}(1-r^{2})^{\alpha/2} \int_{\T}\frac{\xi^{n}}{(1-r\bar{\xi}z)^{2}}d\xi rdr\\
&=&\int_{0}^{1} r^{n}(1-r^{2})^{\alpha/2} \int_{\T}\xi^{n}\sum_{k} (k+1)r^{k}\bar{\xi}^{k} z^{k} d\xi rdr\\
&=&\int_{0}^{1} r^{2n}(1-r^{2})^{\alpha/2}  (n+1) z^{n}  rdr\\
&=&\int_{0}^{1} r^{n}(1-r)^{\alpha/2}  (n+1) z^{n} dr\\
&=& (n+1)\beta(n+1,\alpha/2+1)z^{n} \\
\end{eqnarray*}
où $\beta$ est la fonction Bêta. Ainsi, si l'on définit l'opérateur $M_{\alpha}$ de $\mathcal{A}^{2}$ dans $\mathcal{A}^{2,\alpha}$ par 
\[ M_{\alpha}(z^{n})= [(n+1)\beta(n+1,\alpha/2+1)]^{-1}z^{n}\]
on a l'identité sur $\mathcal{A}^{2,\alpha}$:
\[M_{\alpha}P_{0}C_{\alpha}=Id=P_{\alpha}.\]
Cet opérateur $M_{\alpha}$ est borné. En effet:
 \[(\sqrt{n+1}z^{n})_{n} \quad \textrm{et} \quad ([\sqrt{\alpha+1}\sqrt{\beta(n+1,\alpha+1)}]^{-1}z^{n})_{n}\] sont des bases orthonormales respectivement de $\mathcal{A}^{2}$ et $\mathcal{A}^{2,\alpha}$ \cite{AFP}. Ainsi
\begin{eqnarray*}
M_{\alpha}(\sqrt{n+1}z^{n})&=& \frac{1}{\sqrt{n+1}}\frac{1}{\beta(n+1,\alpha/2+1)}z^{n}\\
&=&\frac{\sqrt{\alpha+1}\sqrt{\beta(n+1,\alpha+1)}}{\sqrt{n+1}\beta(n+1,\alpha/2+1)} \frac{1}{\sqrt{\alpha+1}\sqrt{\beta(n+1,\alpha+1)}} z^{n}. 
\end{eqnarray*}
De l'équivalent en l'infini, pour $y$ positif fixé \[ \beta(x,y) \sim \Gamma(y) x^{-y} \] on en déduit que 
\begin{eqnarray*}
\frac{\sqrt{\alpha+1}\sqrt{\beta(n+1,\alpha+1)}}{\sqrt{n+1}\beta(n+1,\alpha/2+1)} &\simeq& \frac{(n+1)^{\alpha/2+1}}{(n+1)^{1/2}(n+1)^{\alpha/2+1/2}}\\
&\simeq& 1\\
\end{eqnarray*}
ce qui suffit pour conclure. L'opérateur $M_{\alpha}$ est même inversible et on le prolonge de façon artificielle sur $L^{2}$ tout entier en envoyant une base orthonormale de $\left(\mathcal{A}^{2}\right)^{\perp}$ sur une base orthonormale de $\left(\mathcal{A}^{2,\alpha}\right)^{\perp}$. Cela reste un opérateur borné et inversible.

%
On peut aussi montrer que notre opérateur $M_{\alpha}P_{0}C_{\alpha}$ coïncide avec $P_{\alpha}$ sur $\bar{\mathcal{A}}^{2,\alpha}$ c'est à dire que 
 $M_{\alpha}P_{0}C_{\alpha}(\bar{\mathcal{A}}^{2,\alpha})=0$. Cependant $M_{\alpha}P_{0}C_{\alpha}$ n'envoie a priori pas  l'orthogonal de $\mathcal{A}^{2,\alpha}$ sur $0$. Il faut donc introduire l'opérateur $X$ sur $L^{2,\alpha}$ tel que
\[M_{\alpha}P_{0}C_{\alpha}+X = P_{\alpha}.\]

  Sur $L^{2, \alpha}$, on a
\begin{eqnarray*}
I-P_{\alpha}&=&I-M_{\alpha}P_{0}C_{\alpha}-X\\
&=& M_{\alpha}(I-P_{0})C_{\alpha} + I - M_{\alpha}C_{\alpha}-X\\
\end{eqnarray*}
et donc pour $f\in L^{\infty}$, où l'on note $M_{\bar{f}}$ l'opérateur de multiplication par $\bar{f}$ sur $L^{2,\alpha}$:
\begin{eqnarray*}
(I-P_{\alpha})M_{\bar{f}}= M_{\alpha}(I-P_{0})C_{\alpha}M_{\bar{f}} + (I - M_{\alpha}C_{\alpha}-X)M_{\bar{f}}.
\end{eqnarray*}
En voyant  $M_{\bar{f}}$ comme un opérateur sur $L^{2}$, on peut écrire
\begin{eqnarray*}
(I-P_{\alpha})M_{\bar{f}}= M_{\alpha}(I-P_{0})M_{\bar{f}}C_{\alpha} + (I - M_{\alpha}C_{\alpha}-X)M_{\bar{f}}
\end{eqnarray*}
d'où
\begin{eqnarray}\label{eq1}
H_{\bar{f}}^{\alpha}=   M_{\alpha}H_{\bar{f}}C_{\alpha} + (I - M_{\alpha}C_{\alpha}-X)M_{\bar{f}}. 
\end{eqnarray}
Posons $f(z)=z+2$, par l'égalité (\ref{casmonome}), on sait que $H_{\bar{f}}^{\alpha}$ et $H_{\bar{f}}$ sont dans l'idéal de Macaev et donc $(I - M_{\alpha}C_{\alpha}-X)M_{\bar{f}}$ l'est aussi. Comme $M_{\bar{f}}$ est inversible et borné, on en déduit que $(I - M_{\alpha}C_{\alpha}-X)$ est aussi dans l'idéal de Macaev. Or cet opérateur est indépendant de $\bar{f}$ donc si l'on revient à l'égalité pour $\bar{f}$ borné quelconque:
\[H_{\bar{f}}^{\alpha}=M_{\alpha}H_{\bar{f}}C_{\alpha} + (I - M_{\alpha}C_{\alpha}-X)M_{\bar{f}} \]
on voit que pour tout symbole borné $f$, puisque $M_{\alpha}$ est inversible borné, $H_{\bar{f}}^{\alpha}$ est dans l'idéal de Macaev si et seulement si $H_{\bar{f}}$ l'est .
\end{proof}
\begin{remarque}
On aurait pu construire, au moins de façon théorique, une isométrie $C_{\alpha}$ envoyant $\mathcal{A}^{2,\alpha}$ sur $\mathcal{A}^{2}$ et de même pour leurs orthogonaux. Cependant, on n'aurait pu, à priori, le faire commuter avec $M_{\bar{f}}$.
\end{remarque}
Comme $\mathcal{D}^{1}_{\alpha} \subset H^{\infty}$, cela suffit pour affirmer que l'espace de Dixmier est l'espace $H'$ pour tout $\alpha>-1$. \\

Passons à l'expression de la trace, pour cela, remarquons que l'égalité (\ref{eq1}):
\begin{eqnarray*}
H_{\bar{f}}^{\alpha}=   M_{\alpha}H_{\bar{f}}C_{\alpha} + (I - M_{\alpha}C_{\alpha}-X)M_{\bar{f}}. 
\end{eqnarray*}
 est vraie pour une fonction $f$ positive dans $C_{c}(\D)$ et $g$ positive dans $C_{0}(\D)$ et notons \[T=I - M_{\alpha}C_{\alpha}-X\]. 
\begin{lemme}
Si $f$ est positive dans $C_{c}(\D)$ alors $H_{f}^{\alpha} \in \mathcal{S}^{1}$
\end{lemme}
\begin{proof}
De l'égalité, où $T_{f}$ est l'opérateur de Toeplitz:
\[ H^{*}_{f}H_{f}=T_{|f|^{2}}-T_{\bar{f}}T_{f}\]
il suffit de montrer que  pour toute fonction $f$ positive à support compact, on a $T_{f} \in \mathcal{S}^{1/2}$.\\
Notons 
\[ d\mu(w)=f(z)(1-|w|^{2})^{\alpha}dA(w).\]
Comme le disque hyperbolique $D(z,r)$ est un disque euclidien de centre et rayon (p 59 \cite{Z}):
\[ C=\frac{1-s^{2}}{1-s^{2}|z|^{2}}z, \quad R=\frac{1-|z|^{2}}{1-s^{2}|z|^{2}}s, \quad s= \tanh(r) \]
la fonction
\[z \mapsto \mu(D(z,r))=\int_{D(z,r)} d\mu(w) \] est à support compact et donc est dans $L^{1/2}(\D, d\lambda)$. Par le théorème 3 de \cite{ZT}, $T_{f} \in \mathcal{S}^{1/2}$.

\end{proof}
Ce dernier lemme et l'égalité (\ref{eq1}) permet d'écrire pour $f$ positive dans $C_{c}(\D)$:
\[Tr_{\omega}((I - M_{\alpha}C_{\alpha}-X)M_{\bar{f}})=0.\]
Si à présent $||f-g||_{\infty}<\epsilon$, on a
\begin{eqnarray*}
|Tr_{\omega}(TM_{\bar{g}})-Tr_{\omega}(TM_{\bar{f}})|&=& |Tr_{\omega}(TM_{g-f})|\\
&\leq& |Tr_{\omega}(T)|||M_{g-f}||\\
&\leq& |Tr_{\omega}(T)|||f-g||_{\infty}
\end{eqnarray*}
et donc
\[Tr_{\omega}(TM_{\bar{g}})=0\]
Mais $M_{g}$ est inversible borné sur $L^{2}$, on a donc
\[Tr_{\omega}(T)=0\]
Ainsi pour $f \in H'$ et tout $n$ entier:
\[s_{n}(H_{\bar{f}}^{\alpha}) \leq s_{n}(M_{\alpha}H_{\bar{f}}C_{\alpha})+s_{n}(TM_{f})\] 
et donc
\[ Tr_{\omega}( |H_{\bar{f}}^{\alpha}|) =   Tr_{\omega}(M_{\alpha}|H_{\bar{f}}|C_{\alpha}) \leq ||M_{\alpha}||Tr_{\omega}( |H_{\bar{f}}|). \]
De la même façon:
\[||M_{\alpha}^{-1}||Tr_{\omega}( |H_{\bar{f}}|^{\alpha}) \geq Tr_{\omega}( M_{\alpha}^{-1}|H_{\bar{f}}^{\alpha}|C_{\alpha}^{-1})=   Tr_{\omega}(H_{\bar{f}}).\] 
Comme $M_{\alpha}^{-1}$ est un opérateur diagonal on a $||M_{\alpha}^{-1}||=||M_{\alpha}||^{-1}$ et donc
\[Tr_{\omega}( |H_{\bar{f}}^{\alpha}|) = ||M_{\alpha}|| Tr_{\omega}( |H_{\bar{f}}|). \]
Par l'égalité (\ref{casmonome}), on a $||M_{\alpha}|| = \sqrt{\alpha+1}$ et donc:
\[ Tr_{\omega}( |H_{\bar{f}}^{\alpha}|) = \sqrt{\alpha+1} \int_{\T} |f'| d\theta. \]

Les arguments  précédents restent valable pour une symbole borné de $L^{2,\alpha}$, ce qui prouve le théorème \ref{théorème2}.\\

\section {Généralisation}

De façon plus générale, pour $k>1$ un réel et $f \in \mathcal{A}^{2}$, on peut se demander quand est ce que $|H_{\bar{f}}|^{k}$ est dans l'idéal de Macaev, c'est à dire quand est ce que $f \in \mathcal{D}^{k}$. On peut montrer que l'espace de Besov $\mathcal{B}^{p}$ est l'ensemble des $f \in \mathcal{H}ol(\D)$ vérifiant
\begin{eqnarray}\label{besovfraction}
(1-|z|^{2})^{t}R^{t}f(z) \in L^{p}(d\lambda), \quad pt>1 
\end{eqnarray}
où $R^{t}f$ est la dérivée fractionnaire de $f$ (voir ci dessous). Avec la même approche que dans la section $3$, on peut alors penser que $\mathcal{D}^{k}$ est l'ensemble des fonctions vérifiant:
\[ \int_{\T} |R^{1/k}f|^{k} d\theta < \infty. \]
Ce n'est pas le cas, $\mathcal{D}^{k}$ contient strictement cet espace. La situation est ici différente du cas $k=1$ puisqu'il existe des symboles non constants vérifiant \[|H_{\bar{f}}|^{k} \in \mathcal{S}^{1}. \]  

Soit $t$ un réel positif et $f$ une fonction holomorphe sur $\D$ avec
\[ f(z)=\sum_{k=0}^{\infty} a_{k}z^{k} \]
on définit (voir p 18 \cite{Z1})
\[ R^{t}f(z):=  \sum_{k=1}^{\infty} k^{t}a_{k}z^{k} \]
et
\[ R^{0,t}f(z):=  \sum_{k=0}^{\infty} \frac{\Gamma(2)}{\Gamma(2+t)}\frac{\Gamma(2+t+k)}{\Gamma(2+k)} a_{k}z^{k} \]
Pour $t$ fixé, par Stirling, on a:
\begin{eqnarray}\label{equivalentdérivée}
 \frac{\Gamma(2+t+k)}{\Gamma(2+k)} \sim k^{t}
\end{eqnarray}

\begin{lemme}
Pour $f \in \mathcal{A}^{2}$, on a:
\[ f(z)=(t+1)\int_{\D} \frac{(1-|w|^{2})^{t}R^{0,t}f(w)}{(1-z\bar{w})^{2}}dA(w). \]
\end{lemme}
\begin{proof}
Soit $g$ la fonction définie par
\[ g(z)=(t+1)\int_{\D} \frac{(1-|w|^{2})^{t}R^{0,t}f(w)}{(1-z\bar{w})^{2}}dA(w). \]
Par le théorème 2.19 p 54 de \cite{Z1}, cette expression a bien un sens.
Dérivons sous l'intégrale:
\[ R^{0,t}g(z)=(t+1)\int_{\D} (1-|w|^{2})^{t}R^{0,t}f(w)R^{0,t}\left(\frac{1}{(1-z\bar{w})^{2}}\right)dA(w) \]
Par la proposition 1.14 p 19 de \cite{Z1}:
\[ R^{0,t}\left(\frac{1}{(1-z\bar{w})^{2}}\right) = \frac{1}{(1-z\bar{w})^{2+t}} \]
et donc
\begin{eqnarray*}
 R^{0,t}g(z)&=&(t+1)\int_{\D} (1-|w|^{2})^{t}R^{0,t}f(w)\frac{1}{(1-z\bar{w})^{2+t}}dA(w) \\
&=& R^{0,t}f(z)
\end{eqnarray*}
puisque $R^{0,t}f \in L^{1}((1-|z|^{2})^{t}dA)$, voir p 53 de \cite{Z}.\\
Ainsi \[R^{0,t}f= R^{0,t}g\] et l'on a terminé.
\end{proof}
On peut alors comparer les deux normes de Besov (\ref{besovclassique}) et (\ref{besovfraction}):
\begin{lemme}
Il existe $C>0$ (dépendant uniquement de $k$) vérifiant pour tout $t>0$, $p>1$ et tout $f \in \mathcal{A}^{2}$:
 \[ ||(1-|z|^{2})f'(z)||_{L^{pk}(d\lambda)} \leq C ||(1-|z|^{2})^{1/k}R^{1/k}f(z) ||_{L^{pk}(d\lambda)} \]
\end{lemme}
\begin{proof}
Le lemme précédent affirme que pour tout $t>0$
\[ f(z)=(t+1)\int_{\D} \frac{(1-|w|^{2})^{t}R^{0,t}f(w)}{(1-z\bar{w})^{2}}dA(w) \]
ainsi en dérivant sous le signe intégral:
\[ (1-|z|^{2})f'(z)=(1-|z|^{2})(t+1)\int_{\D} 2\bar{w}\frac{(1-|w|^{2})^{t}R^{0,t}f(w)}{(1-z\bar{w})^{3}}dA(w). \]
Comme $pk>k>1$, par le lemme 5.3.2 p 90 de \cite{Z}, il existe $C$ dépendant uniquement de $k$ telle que:
  \[||(1-|z|^{2})f'(z)||_{L^{pk}(d\lambda)} \leq C ||(1-|z|^{2})^{t}R^{0,t}f(z) ||_{L^{pk}(d\lambda)}\]
	On conclut avec (\ref{equivalentdérivée}) et en posant $t=1/k$.
\end{proof}
Nous pouvons à présent prouver l'inclusion annoncée
\begin{proposition}
Soit $k>1$, si $f \in \mathcal{A}^{2}$ vérifie
\[ \int_{\T} |R^{1/k}f|^{k} d\theta < \infty \]
alors 
\[ |H_{\bar{f}}|^{k} \in \mathcal{S}^{1}_{+} \] et
\[ Tr_{\omega}(|H_{\bar{f}}|^{k}) \lesssim \int_{\T} |R^{1/k}f|^{k} d\theta \]
\end{proposition}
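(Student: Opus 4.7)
Le plan est d'imiter la d\'emarche ayant servi au premier th\'eor\`eme principal. On part du constat, fourni par la Proposition \ref{connes1}, que
\[ Tr_{\omega}(|H_{\bar{f}}|^{k}) \leq e \limsup_{s \to 1^{+}} (s-1) \, |||H_{\bar{f}}|^{k}||_{\mathcal{S}^{s}}^{s} = e \limsup_{s \to 1^{+}} (s-1) \, ||H_{\bar{f}}||_{\mathcal{S}^{sk}}^{sk}. \]
Gr\^ace \`a l'\'equivalence d'Arazy-Fisher-Peetre \cite{AFP} entre $||H_{\bar{f}}||_{\mathcal{S}^{p}}$ et $||f||_{\mathcal{B}^{p}}$, avec des constantes que l'on peut rendre uniformes en $p$ dans tout compact de $]1,+\infty[$, et au lemme pr\'ec\'edent appliqu\'e avec $t=1/k$ (de sorte que $t \cdot sk = s > 1$), on se ram\`ene \`a \'etablir
\[ \limsup_{s \to 1^{+}} (s-1) \int_{\D} (1-|z|^{2})^{s-2} |g(z)|^{sk} dA(z) \lesssim ||g||_{H^{k}}^{k}, \]
o\`u $g:=R^{1/k}f$, \'el\'ement de $H^{k}$ par hypoth\`ese.

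L'outil central sera l'identit\'e de Hardy-Stein pour $g$ holomorphe et $q>0$:
\[ \int_{0}^{2\pi} |g(re^{i\theta})|^{q} \frac{d\theta}{2\pi} = |g(0)|^{q} + \frac{q^{2}}{2\pi} \int_{|w|<r} |g(w)|^{q-2}|g'(w)|^{2} \log(r/|w|) dA(w). \]
On l'appliquera avec $q=sk$ puis on int\'egrera contre la mesure de probabilit\'e $d\lambda_{s}(r)=2(s-1)r(1-r^{2})^{s-2}dr$ d\'ej\`a utilis\'ee dans la preuve de la Proposition \ref{descriptionhardy}. Apr\`es Fubini et l'estimation, uniforme en $s\in]1,2]$,
\[ \int_{|z|}^{1} (1-r^{2})^{s-2} 2r\log(r/|z|) dr \lesssim \frac{(1-|z|^{2})^{s}}{s-1}, \]
on aboutira \`a
\[ (s-1) \int_{\D} (1-|z|^{2})^{s-2} |g|^{sk} dA \lesssim |g(0)|^{sk} + \int_{\D} |g|^{sk-2}|g'|^{2} (1-|z|^{2})^{s} dA. \]

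Pour contr\^oler la seconde int\'egrale, on proc\'edera par interpolation: on \'ecrit $|g|^{sk-2} = |g|^{k-2} |g|^{(s-1)k}$ et on majore le second facteur par la borne ponctuelle classique $|g(z)| \leq C||g||_{H^{k}}(1-|z|)^{-1/k}$, valable pour tout $g \in H^{k}$. Les exposants se simplifient alors parfaitement puisque $(1-|z|^{2})^{s} (1-|z|)^{-(s-1)} \simeq 1-|z|$, et une nouvelle application de Hardy-Stein (en faisant $r\to1$ avec $q=k$) donnera $\int_{\D} |g|^{k-2}|g'|^{2}(1-|z|)dA \simeq ||g||_{H^{k}}^{k}$. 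En passant \`a la limsup et en notant que $C^{(s-1)k}\to 1$ lorsque $s\to1^{+}$, on obtiendra la borne voulue; cela entra\^inera $|H_{\bar{f}}|^{k} \in \mathcal{S}^{1}_{+}$ par le Lemme \ref{lirusso} ainsi que la majoration annonc\'ee pour la trace de Dixmier.

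L'obstacle principal r\'eside dans le contr\^ole uniforme en $s$ de toutes les constantes qui interviennent: il faut s'assurer que l'estimation de l'int\'egrale radiale interne ne fait pas intervenir $s$ de fa\c{c}on g\^enante, et que la borne ponctuelle et l'\'equivalence d'Arazy-Fisher-Peetre s'appliquent avec des constantes born\'ees uniform\'ement lorsque $s$ varie dans un voisinage de $1$. Ce n'est qu'\`a ce prix que le passage \`a la limite sup\'erieure $s \to 1^{+}$ pr\'eserve la forme major\'ee par $\int_{\T} |R^{1/k}f|^{k} d\theta$.
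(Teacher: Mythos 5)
Votre d\'emonstration est correcte, mais elle s'\'ecarte de celle du papier au niveau de l'estimation analytique centrale. La r\'eduction initiale est la m\^eme : via la Proposition \ref{connes1}, l'identit\'e $\||H_{\bar f}|^{k}\|_{\mathcal{S}^{s}}^{s}=\|H_{\bar f}\|_{\mathcal{S}^{sk}}^{sk}$, l'\'equivalence avec la norme de Besov et le lemme de comparaison des d\'eriv\'ees fractionnaires pris en $t=1/k$, tout se ram\`ene \`a contr\^oler $(s-1)\int_{\D}(1-|z|^{2})^{s-2}|R^{1/k}f|^{sk}\,dA$ uniform\'ement en $s$ puis \`a la limite $s\to1^{+}$. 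Le papier conclut alors en invoquant le raisonnement de la section 3 (sous-harmonicit\'e de $|R^{1/k}f|^{sk}$, croissance des moyennes circulaires et int\'egration par parties contre la mesure de probabilit\'e $d\lambda_{s}$), argument qui ne fournit en toute rigueur la majoration par $\|R^{1/k}f\|_{H^{sk}}^{sk}$ que lorsque cette quantit\'e est finie pour $s>1$. Vous remplacez cela par une double application de l'identit\'e de Hardy--Stein combin\'ee \`a la borne ponctuelle $|g(z)|\lesssim\|g\|_{H^{k}}(1-|z|)^{-1/k}$ ; le d\'ecoupage $|g|^{sk-2}=|g|^{k-2}|g|^{(s-1)k}$ vous donne une borne r\'eellement uniforme en $s\in\,]1,2]$ ne faisant intervenir que $\|g\|_{H^{k}}$, d'o\`u \`a la fois l'appartenance \`a $\mathcal{S}^{1}_{+}$ par le Lemme \ref{lirusso} et la majoration de la trace par la Proposition \ref{connes1}, sans jamais supposer $g\in H^{sk}$ : c'est plus long, mais plus robuste sur le point d'uniformit\'e que le papier laisse implicite. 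Deux d\'etails sont \`a rectifier : l'estimation $\int_{|z|}^{1}2r(1-r^{2})^{s-2}\log(r/|z|)\,dr\lesssim(1-|z|^{2})^{s}/(s-1)$ n'est valable que pour $|z|$ \'eloign\'e de l'origine (pr\`es de $0$ subsiste un facteur $\log(1/|z|)$, inoffensif puisque la contribution de $|z|<1/2$ se contr\^ole par les estim\'ees int\'erieures) ; et dans la derni\`ere \'etape seule l'in\'egalit\'e $\int_{\D}|g|^{k-2}|g'|^{2}(1-|z|)\,dA\lesssim\|g\|_{H^{k}}^{k}$ est exacte (l'\'equivalence $\simeq$ \'echoue pr\`es de l'origine), mais c'est le seul sens dont vous avez besoin.
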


\begin{proof}
Comme $k>1$, on a uniformément en $p$:
\[ |||H_{\bar{f}}|^{k}||_{\mathcal{S}^{p}}^{p} = |||H_{\bar{f}}|^{k}||_{\mathcal{S}^{pk}}^{pk} \simeq ||f||_{\mathcal{B}^{pk}}^{pk}\]
où
\[||f||_{\mathcal{B}^{pk}}^{pk} = ||(1-|z|^{2})f'(z)||_{L^{pk}(d\lambda)}^{pk}. \] 
Ainsi, par le lemme précédent, uniformément en $p>1$:
\begin{eqnarray*}
|||H_{\bar{f}}|^{k}||_{\mathcal{S}^{p}}^{p} &\lesssim& ||(1-|z|^{2})f'(z)||_{L^{pk}(d\lambda)}^{pk} \\
&\lesssim& ||(1-|z|^{2})^{1/k}R^{1/k}f(z) ||_{L^{pk}(d\lambda)}^{pk} \\
&=& \int_{\D} (1-|z|^{2})^{p-2} |R^{1/k}f(z)|^{pk} dA(z)
\end{eqnarray*}
et donc, par un raisonnement analogue à celui de la section 3:
\begin{eqnarray*}
|||H_{\bar{f}}|^{k}||_{\mathcal{S}^{1}_{+}} &=& \sup_{p\in ]1;2]} (p-1)|||H_{\bar{f}}|^{k}||_{\mathcal{S}^{p}}\\
&\lesssim& \sup_{p\in ]1;2]} (p-1)\left(\int_{\D} (1-|z|^{2})^{p-2} |R^{1/k}f(z)|^{pk} dA(z)\right)^{1/k}\\
&\lesssim& \left( \int_{\T} |R^{1/k}f|^{k} d\theta \right)^{1/k}
\end{eqnarray*}
Pour la majoration de la trace, on reprend la méthode de la section 3, comme $k>1$, la fonction $|R^{1/k}f|^{k}$ est sous harmonique et on a:
\[ \int_{\T} |R^{1/k}f|^{k} d\theta = \lim_{p\rightarrow 1} (p-1) \int_{\D} (1-|z|^{2})^{p-2} |R^{1/k}f(z)|^{pk} dA(z)\]
On conclut grâce à (\ref{connes1})
\end{proof}
Il reste à montrer que l'inclusion est stricte. Soit 
\[ f(z)	= \sum_{n=0}^{\infty} z^{n} \]
on a uniformément en $p$:
\begin{eqnarray*}
(p-1)||f||_{\mathcal{B}^{pk}} &=& (p-1) \left( \int_{\D} \left|\left(\sum_{k} z^{n}\right)'\right|^{pk} (1-|z|^{2})^{pk-2} dA(z)\right)^{1/pk}\\
&\leq& (p-1) \sum_{n} \left(\int_{\D} \left|( z^{n})'\right|^{pk} (1-|z|^{2})^{pk-2} dA(z) \right)^{1/pk}\\
&\simeq&(p-1) \left(\sum_{n} \int_{0}^{1} (nr)^{(n-1)pk}(1-r)^{pk-2}rdr\right)^{1/pk}\\
&=& (p-1) \left(\sum_{n} n^{(n-1)pk} \beta((n-1)pk+2,pk-1)\right)^{1/pk}\\
&\simeq&(p-1) \left(\sum_{n} n^{1-p}\right)^{1/pk}\\
\end{eqnarray*}
où $\beta$ est la fonction Bêta. Si l'on pose
\[F(z)= \sum_{n=0}^{\infty} z^{\lambda_{n}}\] une série est lacunaire au sens d'Hadamard, c'est à dire avec $\frac{\lambda_{k+1}}{\lambda_{k}}\geq c >1$, alors $c^{k} \lesssim \lambda_{k}$ et on a pour tout $p\in ]1;2]$:
\begin{eqnarray*}
(p-1)||F||_{\mathcal{B}^{pk}}^{pk} &\lesssim& (p-1) \sum_{n} c^{-n(p-1)}\\
&=& \frac{(p-1)}{c^{p-1}-1}\\
&\lesssim& (\ln(c))^{-1}.\\
\end{eqnarray*}
Passant au sup, $F \in \mathcal{D}^{k}$. Mais
\[R^{1/k}F(z)= \sum_{n=0}^{\infty} \lambda_{n}^{k}z^{\lambda_{n}}\] n'est pas dans $H^{1}$ par un théorème de Paley p 104 \cite{Du}.\\
\newline
\textbf{Remerciements:} je tiens à remercier chaleureusement M. Englis pour ses remarques et ses conseils.

\bibliographystyle{plain}
\bibliography{biblio}

\begin{thebibliography}{10}

\bibitem{AFP85}
J.~Arazy, S.~Fisher, and J.~Peetre.
\newblock Mobius invariant function spaces.
\newblock {\em J. reine angew. Math.}, 363:110--145, 1986.

\bibitem{AFP}
J.~Arazy, S.~Fisher, and J.~Peetre.
\newblock {Hankel operators on weighted Bergman spaces}.
\newblock {\em Amer. J. Math.}, 110:989--1053, 1988.

\bibitem{A}
S.~Axler.
\newblock {The Bergman space, the Bloch space and commutators of multiplication
  operators}.
\newblock {\em Duke Math. J.}, 53:315--332, 1986.

\bibitem{C}
A.~Connes.
\newblock {\em {Noncommutative Geometry}}.
\newblock Academic Press, San Diego, CA, 1994.

\bibitem{CM}
A.~Connes and H.~Moscovici.
\newblock {The Local Index Formula Index in Noncommutative Geometry}.
\newblock {\em Geom. Funct. Anal.}, 5:174--243, 1995.

\bibitem{Du}
P.L. Duren.
\newblock {\em {Theory of $H^{p}$ spaces}}.
\newblock Pure and Applied Mathematics 38. Academic Press, New York-London,
  1970.

\bibitem{EGZ}
M.~Englis, K.~Guo, and G.~Zhang.
\newblock {Toeplitz and Hankel operators and Dixmier traces on the unit ball of
  $\C^{n}$}.
\newblock {\em Proc. Amer. Math Soc. in press}, 137:3669--3678, 2009.

\bibitem{ER}
M.~Englis and R.~Rochberg.
\newblock {The Dixmier trace of Hankel operators on the Bergman space}.
\newblock {\em J. Funct. Anal.}, 257:1445--1479, 2009.

\bibitem{GK}
I.~Gohberg and M.~G. Krein.
\newblock {\em {Introduction to the theory of linear nonselfadjoint
  operators}}.
\newblock Transl. Math. Monographs, 1969.

\bibitem{LR}
S.Y Li and B.~Russo.
\newblock {Hankel operators in the Dixmier class}.
\newblock {\em C. R. Acad. Sci. Paris}, 325:21--26, 1997.

\bibitem{L}
D.~Luecking.
\newblock {Characterizations of certain classes of Hankel operators on the
  Bergman spaces of the unit disk}.
\newblock {\em J. Func. Anal.}, 110:247--271, 1992.

\bibitem{MV}
R.~Meise and D.~Vogt.
\newblock {\em {Introduction to Functional Analysis}}.
\newblock Oxford Sci. Pub., 1997.

\bibitem{Pa}
M.~Pavlovic.
\newblock {\em {Introduction to function spaces on the disk}}.
\newblock Posebna Izdanja 20. Matematicki Institut SANU, Belgrade, 2004.

\bibitem{Pe}
V.~Peller.
\newblock {\em {Hankel Operators and their applications}}.
\newblock Springer-Verlag, New-York, 2003.

\bibitem{RuARC}
W.~Rudin.
\newblock {\em {Real an complex analysis}}.
\newblock McGraw-Hill Book Company, New York, 1966.

\bibitem{SY}
K.~Seip and E.H Youssfi.
\newblock {Hankel Operators on Fock Spaces and Related Bergman Kernel
  Estimates}.
\newblock {\em J. Geo. Anal.}, Online First, 2011.

\bibitem{S}
B.~Simon.
\newblock {\em {Trace ideal and their applications}}.
\newblock Cambridge University Press, 1979.

\bibitem{T}
R.~Tytgat.
\newblock {Classe de Dixmier d'opérateurs de Hankel}.
\newblock {\em Journal of Operator Theory}, à paraitre, 201?

\bibitem{Z}
K.~Zhu.
\newblock {\em {Operator Theory in Function Spaces}}.
\newblock Marcel Dekker, New York, 1990.

\bibitem{ZABS}
K.~Zhu.
\newblock {Analytic Besov spaces}.
\newblock {\em Journal of Mathematical analysis and applications},
  157:318--336, 1991.

\bibitem{Z1}
K.~Zhu.
\newblock {\em {Spaces of Holomorphic Functions in the Unit Ball}}.
\newblock Graduate Texts in Mathematics 226, Springer-Verlag, New York, 2005.

\bibitem{ZT}
K.~Zhu.
\newblock {Schatten class Toeplitz operators on weighted Bergman spaces of the
  unit ball}.
\newblock {\em New York J. Math.}, 13:299--316, 2007.

\end{thebibliography}

 \end{document}